\theoremstyle{plain}
\newtheorem{theorem}{Theorem}[section]
\newtheorem{lemma}[theorem]{Lemma}
\newtheorem{cor}[theorem]{Corollary}
\newtheorem{prop}[theorem]{Proposition}
\newtheorem{conjecture}[theorem]{Conjecture}
\newtheorem*{lemma*}{Lemma}
\newtheorem*{cor*}{Corollary}
\newtheorem*{theorem*}{Theorem}
\theoremstyle{definition}
\newtheorem{problem*}{Problem}
\newtheorem{example}{Example}
\newtheorem{definition}[theorem]{Definition}
\theoremstyle{remark}
\newtheorem*{fact*}{Fact}
\newtheorem*{remark}{Remark}
\newtheorem*{notation}{Notation}
\newtheorem*{convention}{Convention}
\let\oldproofname=\proofname
\renewcommand{\proofname}{\rm\bf{\oldproofname}}
\newcommand{\refthm}[1]{Theorem \ref{#1}}
\newcommand{\refprop}[1]{Proposition \ref{#1}}
\newcommand{\refcor}[1]{Corollary \ref{#1}}
\newcommand{\refconj}[1]{Conjecture \ref{#1}}
\newcommand{\reflem}[1]{Lemma \ref{#1}}
\newcommand{\refdef}[1]{Definition \ref{#1}}
\newcommand{\refsec}[1]{Section \ref{#1}}
\newcommand{\refeq}[1]{(\ref{#1})}
\newcommand{\Z}{\mathbb Z}
\newcommand{\phii}{\varphi}
\providecommand*{\twoheadrightarrowfill@}{%
  \arrowfill@\relbar\relbar\twoheadrightarrow
}
\providecommand*{\twoheadleftarrowfill@}{%
  \arrowfill@\twoheadleftarrow\relbar\relbar
}
\providecommand*{\xtwoheadrightarrow}[2][]{%
  \ext@arrow 0579\twoheadrightarrowfill@{#1}{#2}%
}
\providecommand*{\xtwoheadleftarrow}[2][]{%
  \ext@arrow 5097\twoheadleftarrowfill@{#1}{#2}%
}
\newcommand{\surj}{\twoheadrightarrow}
\newcommand{\inj}{\hookrightarrow}
\newcommand{\op}[1]{\operatorname{{#1}}}
\newcommand{\mc}[1]{\mathcal{{#1}}}
\begin{document}
    \title{A Spectral Sequence for Dehn Fillings}
    \author{Oliver H. Wang}
        \address{Department of Mathematics, University of Chicago, Chicago, IL 60637}
    \email{oliverwang@uchicago.edu}
    \begin{abstract}
		We study how the cohomology of a type $F_\infty$ relatively hyperbolic group pair $(G,\mc{P})$ changes under Dehn fillings (i.e. quotients of group pairs).
		For sufficiently long Dehn fillings where the quotient pair $(\bar{G},\bar{\mc{P}})$ is of type $F_\infty$, we show that there is a spectral sequence relating the cohomology groups $H^i(G,\mc{P};\Z G)$ and $H^i\left(\bar{G},\bar{\mc{P}};\Z\bar{G}\right)$.
		As a consequence, we show that essential cohomological dimension does not increase under these Dehn fillings.
	\end{abstract}
    \maketitle

\section{Introduction}
Classically, Dehn filling is a process by which one can obtain infinitely many distinct closed hyperbolic $3$-manifolds from a hyperbolic $3$-manifold with toroidal cusps.
This notion has been adapted to relatively hyperbolic groups by Osin in \cite{Osin07} and by Groves and Manning \cite{GM08}.
In these works, the authors introduce a group theoretic analogue of Dehn filling and show that this process, when applied to a relatively hyperbolic group, yields a relatively hyperbolic group.
We study how the cohomology groups $H^i(G,\mc{P};\Z G)$ behave under this procedure which, by a result of Manning and the author in \cite{MW}, are isomorphic to the cohomology of the Bowditch boundary.

Our motivating example is a theorem by Groves, Manning and Sisto.
In \cite{GMS}, it is shown that, if $(G,\mc{P})$ is relatively hyperbolic with Bowditch boundary $\partial(G,\mc{P})\cong S^2$ and such that each subgroup of $\mc{P}$ is free abelian of rank $2$, then for all sufficiently long Dehn fillings where the quotients $P_i/N_i$ are virtually infinite cyclic, the quotient group $\bar{G}$ is hyperbolic with boundary $\partial G\cong S^2$.
Assuming that all groups involved are type $F_\infty$, the following isomorphisms can be derived from Bieri and Eckmann's work on relative cohomology (\cite[Proposition 1.1]{BE}), Bestvina and Mess's results on the cohomology of hyperbolic groups (\cite[Corollary 1.3]{BM91}) and the aforementioned theorem \cite[Theorem 1.1]{MW}.
\begin{equation}\label{eq: GMS example}
H^i(G,\mc{P};\Z G)\cong\begin{cases}\Z&i=3\\0&i\neq3\end{cases}\hspace{1 cm}H^i(\bar{G},\bar{\mc{P}};\Z\bar{G})\cong\begin{cases}\Z&i=3\\\bigoplus_{\bar{P}\in\bar{\mc{P}}}\bigoplus_{\bar{g}\bar{P}\in\bar{G}/\bar{P}}\Z&i=2\\0&i\neq2,3\end{cases}
\end{equation}
From this example, it would seem that there is only hope for a statement about top dimensional cohomology.
Our main result, stated below, shows that we can relate the cohomology of a relatively hyperbolic group and the cohomology of its Dehn fillings in all dimensions.

\begin{restatable}{theorem}{MainThm}\label{thm: Main Thm}
Suppose $(G,\mc{P})$ is a type $F_\infty$ relatively hyperbolic group pair and let $n=\op{ecd}(G,\mc{P})$.
For all sufficiently long $F_\infty$ Dehn fillings $(\bar{G},\bar{\mc{P}})$, there is a spectral sequence
\[
E^2_{pq}=H_p(K;H^{n-q}(G,\mc{P};\Z G))\Rightarrow H^{n-(p+q)}\left(\bar{G},\bar{\mc{P}};\Z\bar{G}\right)
\]
where $K=\op{ker}(G\surj\bar{G})$.
The differentials $d^r_{pq}$ have bidegree $(-r,r-1)$.
\end{restatable}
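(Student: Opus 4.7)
The plan is to realize the spectral sequence as one of the two bicomplex spectral sequences of a double complex $F_\bullet \otimes_{\Z K} D^\bullet$, where $F_\bullet$ is a free $\Z K$-resolution of $\Z$ and $D^\bullet$ is a cochain complex of finitely generated free $\Z G$-modules computing $H^*(G, \mc{P}; \Z G)$. The essential geometric input is a classifying pair for $(G, \mc{P})$ that descends compatibly under Dehn filling.

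First I would construct a contractible, $G$-cocompact $G$-CW complex $X$ together with a $G$-equivariant system of contractible, $P$-cocompact peripheral subcomplexes $\{gY_P : gP \in G/\mc{P}\}$, such that for all sufficiently long $F_\infty$ Dehn fillings, the $K$-quotient pair $(X/K, \{\overline{gY_P}\})$ is an analogous model for $(\bar{G}, \bar{\mc{P}})$. Such a model should be built by combining the Groves-Manning cusped-space construction with the $F_\infty$ hypothesis on both pairs, extending skeleton by skeleton in a way compatible with the filling. Setting $D^i := \Hom_{\Z G}(C_i(X, \sqcup gY_P), \Z G)$ then gives a cochain complex of finitely generated free $\Z G$-modules whose cohomology is $H^*(G, \mc{P}; \Z G)$ via the Bieri-Eckmann formulation of relative cohomology. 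Since $\Z G$ is free as a $\Z K$-module, each $D^i$ is also free over $\Z K$, and one verifies that $D^\bullet \otimes_{\Z K} \Z$ is canonically identified with the corresponding cochain complex computing $H^*(\bar{G}, \bar{\mc{P}}; \Z \bar{G})$.

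With this model in hand, reindex $D^\bullet$ as a chain complex by setting $\tilde{D}_i := D^{n-i}$; the ecd hypothesis ensures that $H_i(\tilde{D}_\bullet) = H^{n-i}(G, \mc{P}; \Z G)$ is concentrated in degrees $0 \leq i \leq n$, so the double complex $F_\bullet \otimes_{\Z K} \tilde{D}_\bullet$ lives in the first quadrant and both spectral sequences converge to its total homology. Filtering by the $\tilde{D}$-direction and using $\Z K$-flatness of each $\tilde{D}_i$, the first spectral sequence collapses at $E^2$ to $H_*(\tilde{D}_\bullet \otimes_{\Z K} \Z) \cong H^{n-*}(\bar{G}, \bar{\mc{P}}; \Z \bar{G})$, identifying the abutment. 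Filtering by $F$-degree gives the hyperhomology spectral sequence
\[
E^2_{p,q} = H_p\bigl(K; H_q(\tilde{D}_\bullet)\bigr) = H_p\bigl(K; H^{n-q}(G, \mc{P}; \Z G)\bigr),
\]
whose differentials $d^r$ carry the standard bidegree $(-r, r-1)$ of a first-quadrant homological spectral sequence.

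The main obstacle is the geometric step: producing a single $G$-cocompact model $(X, \{gY_P\})$ whose $K$-quotient realizes an $F_\infty$ model for $(\bar{G}, \bar{\mc{P}})$. Cocompact models exist separately for each pair, but the compatibility required here — freeness of $K$ off the peripheral subcomplexes, and the requirement that each $gY_P$ descend cleanly onto the correct peripheral subcomplex for $\bar{\mc{P}}$ — is delicate, and is exactly the place where "sufficiently long" is needed. I expect this part to build on Osin / Groves-Manning's combinatorial control of the normal closure $K$ for long fillings, used inductively across skeleta together with the $F_\infty$ hypotheses on both pairs.
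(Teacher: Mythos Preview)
Your homological-algebra skeleton is exactly the paper's: form the double complex $F_\bullet\otimes_{\Z K}D^\bullet$, collapse one spectral sequence to identify the abutment, and read off the hyperhomology spectral sequence from the other filtration. The gap is in the geometric step, and it is not merely ``delicate'' --- as stated it cannot work.

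If $X$ is contractible with a free $G$-action, then $K$ acts freely on $X$ and $X/K$ is a $K(K,1)$; in particular $X/K$ is \emph{not} contractible and cannot serve as an $E\bar G$. Concretely, if $C_*$ is your $\Z G$-free resolution of $\Delta$ coming from the pair $(X,\sqcup gY_P)$, then $C_*\otimes_{\Z K}\Z$ is the relative chain complex of the quotient pair, and its homology is $H_i(K;\Delta)$, not $\bar\Delta$ concentrated in degree zero. So $\Hom_{\Z\bar G}(C_*\otimes_{\Z K}\Z,\Z\bar G)$, which is what $D^\bullet\otimes_{\Z K}\Z$ computes, does \emph{not} give $H^*(\bar G,\bar{\mc P};\Z\bar G)$. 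No amount of skeleton-by-skeleton adjustment fixes this while keeping $X$ both contractible and $G$-cocompact.

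The paper's resolution is to abandon cocompactness: it takes $D^\bullet=C_c^*(\mc X(G,\mc P);\Z)$, the compactly supported cochains on the cusped space $\tilde X\cup\bigl([0,\infty)\times\tilde Y\bigr)$. Then $D^\bullet\otimes_{\Z K}\Z=C_c^*(\mc X(G,\mc P)/K;\Z)$, and the point is that although $\mc X(G,\mc P)/K$ is not the cusped space for $(\bar G,\bar{\mc P})$, the Cohen--Lyndon property (which is what ``sufficiently long'' buys, via Groves--Manning--Sisto or Sun) shows the two differ by a subspace of the form $[0,\infty)\times V$, whose $H_c^*$ vanishes. This is Propositions~4.2--4.3 in the paper, resting on Proposition~3.5. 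The horoballs are exactly what absorbs the ``extra'' homology that obstructs your cocompact approach.

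One minor correction: after reindexing, the double complex does not live in the first quadrant --- $D^j$ is nonzero for all $j\ge 1$, so $\tilde D_i$ is nonzero for arbitrarily negative $i$. The paper notes this explicitly; convergence is rescued because the $E^2$-page of the hyperhomology spectral sequence has only finitely many nonzero rows, by the finiteness of $\op{ecd}(G,\mc P)$.
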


Here, $\op{ecd}$ stands for ``essential cohomological dimension'' which we define to be the largest $n$ such that $H^n(G,\mc{P};\Z G)$ is nonzero.
In general, this is different from cohomological dimension.
This is discussed in more detail in \refsec{sec: ecd}.

Returning to the example in \cite{GMS}, the kernel $K$ is an infinitely generated free group by \cite[Theorem 4.8]{GMS}.
\refthm{thm: Main Thm} implies that $H^2(\bar{G},\bar{\mc{P}};\Z\bar{G})\cong H_1(K;\Z)$.
If $\Z$ has a trivial $K$-action, then the calculation in \refeq{eq: GMS example} follows immediately.

We now mention some immediate applications of \refthm{thm: Main Thm}.

\subsection{The Bowditch Boundary}
As mentioned above, the motivation for studying these cohomology groups is to study the Bowditch boundary, which was originally introduced by Bowditch in \cite{Bow}
By \cite[Theorem 1.1]{MW}, $H^i\left(G,\mc{P};\Z G\right)\cong\check{H}^{i-1}\left(\partial\left(G,\mc{P}\right);\Z\right)$ whenever $(G,\mc{P})$ is of type $F_\infty$ (here, the right hand side is reduced \v{C}ech cohomology).
This is an isomorphism of $\Z G$-modules so we can replace the cohomology groups of the pairs with the \v{C}ech cohomology groups of the boundaries in \refthm{thm: Main Thm}.

\begin{cor}\label{cor: spectral sequence with cohom of boundary}
Suppose $(G,\mc{P})$ is a type $F_\infty$ relatively hyperbolic group pair and let $n=\op{ecd}(G,\mc{P})$.
For all sufficiently long $F_\infty$ Dehn fillings $(\bar{G},\bar{\mc{P}})$, there is a spectral sequence
\[
E^2_{pq}=H_p\left(K;\check{H}^{n-q-1}\left(\partial\left(G,\mc{P}\right);\Z\right)\right)\Rightarrow\check{H}^{n-(p+q)-1}\left(\partial\left(\bar{G},\bar{\mc{P}}\right);\Z\right)
\]
where $K=\op{ker}(G\surj\bar{G})$ and $\check{H}^i$ denotes reduced \v{C}ech cohomology.
The differentials $d^r_{pq}$ have bidegree $(-r,r-1)$.
\end{cor}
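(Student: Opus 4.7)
The plan is to deduce this corollary essentially by substitution into \refthm{thm: Main Thm}, using the isomorphism $H^i(G,\mc{P};\Z G) \cong \check{H}^{i-1}(\partial(G,\mc{P});\Z)$ from \cite[Theorem 1.1]{MW}. Since $(G,\mc{P})$ is type $F_\infty$ by assumption and $(\bar{G},\bar{\mc{P}})$ is type $F_\infty$ by the hypothesis on the filling, this isomorphism applies on both sides of the convergence statement of \refthm{thm: Main Thm}.

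First I would invoke \refthm{thm: Main Thm} to obtain the spectral sequence
\[
E^2_{pq} = H_p(K; H^{n-q}(G,\mc{P};\Z G)) \Rightarrow H^{n-(p+q)}(\bar{G},\bar{\mc{P}};\Z\bar{G}).
\]
Next I would substitute $\check{H}^{n-q-1}(\partial(G,\mc{P});\Z)$ for $H^{n-q}(G,\mc{P};\Z G)$ on the $E^2$ page, and $\check{H}^{n-(p+q)-1}(\partial(\bar{G},\bar{\mc{P}});\Z)$ for $H^{n-(p+q)}(\bar{G},\bar{\mc{P}};\Z\bar{G})$ in the abutment. The bidegree of the differentials is unchanged because we are only relabelling the entries.

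The one point that requires care is that the substitution on the $E^2$ page happens inside the group homology functor $H_p(K; -)$, so the coefficient modules must be identified as isomorphic $\Z G$-modules (and hence as $\Z K$-modules by restriction along $K\hookrightarrow G$). This is exactly the content of the parenthetical remark in the paragraph preceding the statement of the corollary: the isomorphism of \cite[Theorem 1.1]{MW} is an isomorphism of $\Z G$-modules. Once this equivariance is noted, the substitution is automatic and there is nothing further to prove.

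I do not anticipate a genuine obstacle here; the only thing to be mildly careful about is that the index shift $i\mapsto i-1$ in passing from group cohomology to reduced \v{C}ech cohomology is applied consistently in both the $E^2$ term and the abutment, which is why the index $n-q-1$ (resp.\ $n-(p+q)-1$) appears in the statement.
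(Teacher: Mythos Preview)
Your approach is correct and matches the paper's: the corollary is obtained precisely by substituting the isomorphism $H^i(G,\mc{P};\Z G)\cong\check{H}^{i-1}(\partial(G,\mc{P});\Z)$ of \cite[Theorem 1.1]{MW} into \refthm{thm: Main Thm}, using that it is an isomorphism of $\Z G$-modules.

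One small correction: the justification ``since $(\bar{G},\bar{\mc{P}})$ is type $F_\infty$, this isomorphism applies'' is not quite right. The isomorphism of \cite[Theorem 1.1]{MW} (stated here as \refthm{thm: MW main theorem}) requires the pair to be \emph{relatively hyperbolic} as well as type $F_\infty$; indeed, the Bowditch boundary $\partial(\bar{G},\bar{\mc{P}})$ is not even defined otherwise. You should therefore also invoke the theorem of Osin \cite{Osin07} and Groves--Manning \cite{GM08} that sufficiently long Dehn fillings of a relatively hyperbolic pair are again relatively hyperbolic. With that addition, the argument is complete.
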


For a more concrete topological application of \refthm{thm: Main Thm}, consider the following conjecture, made in \cite{MW}.

\begin{conjecture}\label{conj: dim boundary conjecture}
If $(G,\mc{P})$ is a type $F$ relatively hyperbolic group pair, then $\dim\partial(G,\mc{P})=\op{cd}(G,\mc{P})-1$.
\end{conjecture}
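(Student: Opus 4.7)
The plan is to adapt Bestvina and Mess's theorem $\dim \partial G = \op{cd}(G) - 1$ for hyperbolic groups to the relative setting, with the Manning-Wang isomorphism $H^i(G,\mc{P};\Z G) \cong \check{H}^{i-1}(\partial(G,\mc{P});\Z)$ (valid here since type $F$ implies type $F_\infty$) serving as the bridge between the algebraic invariant $\op{cd}(G,\mc{P})$ and the topology of the boundary. The conjecture then splits into matching upper and lower bounds on $\dim \partial(G,\mc{P})$.

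For the upper bound $\dim \partial(G,\mc{P}) \leq \op{cd}(G,\mc{P}) - 1$, I would first produce a contractible $G$-CW complex $X$ whose dimension realizes $\op{cd}(G,\mc{P})$; this requires an Eilenberg-Ganea-type result for the pair, which combined with the Groves-Manning cusped space construction should give a cocompact model with the prescribed parabolic stabilizers. Next, I would build a compactification $X \cup \partial(G,\mc{P})$ in which the Bowditch boundary sits as a $Z$-set in a compact ANR, adapting Bestvina-Mess and treating parabolic points separately. The standard $Z$-set dimension inequality (pushing cycles in $\partial$ off into the interior of $X$ and invoking the dimension bound on $X$) then yields $\dim \partial(G,\mc{P}) \leq \dim X - 1 = \op{cd}(G,\mc{P}) - 1$.

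For the lower bound, the Manning-Wang isomorphism produces a nonzero class in $\check{H}^{m-1}(\partial(G,\mc{P});\Z)$ with $m = \op{ecd}(G,\mc{P})$, and the vanishing of \v{C}ech cohomology above the covering dimension of a compact normal space then forces $\dim \partial(G,\mc{P}) \geq m - 1$.

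The principal obstacle, and the reason this remains a conjecture, is the potential gap between $\op{ecd}$ and $\op{cd}$ flagged in \refsec{sec: ecd}: if $H^{\op{cd}(G,\mc{P})}(G,\mc{P};\Z G)$ vanishes, then the \v{C}ech cohomology of the boundary only certifies a lower bound of $\op{ecd} - 1$, falling short of $\op{cd} - 1$. Closing this gap requires a relative analogue of Strebel's theorem guaranteeing $\op{ecd}(G,\mc{P}) = \op{cd}(G,\mc{P})$, perhaps under the hypothesis that each parabolic is a duality group, and I expect this to be the main technical difficulty. A natural intermediate target would be to prove the conjecture in the class of pairs satisfying $\op{ecd} = \op{cd}$ and then invoke \refthm{thm: Main Thm} to propagate the resulting dimension equality to sufficiently long Dehn fillings.
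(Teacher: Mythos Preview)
The statement you are addressing is labeled a \emph{conjecture} in the paper, and the paper does not supply a proof; it only remarks that the conjecture is known when $\op{cd}G<\op{cd}(G,\mc{P})$ (via \cite{MW}) and in the absolute hyperbolic case (via Bestvina--Mess). So there is no proof in the paper to compare your proposal against, and your write-up is best read as a strategy outline rather than a proof.

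That said, your diagnosis of the main obstruction is accurate: the lower bound you obtain from \refthm{thm: MW main theorem} only gives $\dim\partial(G,\mc{P})\ge\op{ecd}(G,\mc{P})-1$, and without a relative analogue of the Brown/Strebel argument forcing $H^{\op{cd}(G,\mc{P})}(G,\mc{P};\Z G)\neq 0$, the two bounds need not meet. A few specific points in your outline deserve more care. First, your upper bound relies on realizing $\op{cd}(G,\mc{P})$ by the dimension of a cocompact model; an Eilenberg--Ganea statement for pairs is not standard, and the usual absolute theorem already fails in dimension two, so this step is not routine. Second, you assert that one can compactify by $\partial(G,\mc{P})$ as a $Z$-set in a compact ANR; the paper explicitly flags that $\partial(G,\mc{P})$ is not a $Z$-set in general, and while the type $F$ hypothesis helps, the parabolic points sit in the boundary with noncompact stabilizer horoballs behind them, so the local contractibility and $Z$-set conditions at those points require a genuine argument rather than a direct transcription of Bestvina--Mess. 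Until these two ingredients are supplied, the upper bound is not established, and the proposal remains a plan rather than a proof.
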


Of course, the case when $(G,\mc{P})$ is of type $F_\infty$ is also of interest but $\partial(G,\mc{P})$ is not a $Z$-set in general, which makes topological statements about $\partial(G,\mc{P})$ more difficult to prove.
The conjecture is shown to be true in the case when $(G,\mc{P})$ is of type $F$ and $\op{cd}G<\op{cd}(G,\mc{P})$.
Moreover, \cite[Corollary 1.4]{BM91} gives the non-relatively hyperbolic statement of this conjecture: whenever $G$ is of type $F$ and hyperbolic, $\dim\partial G=\op{cd}(G)-1$.
This motivates the question of how $\op{cd}$ behaves under type $F$ Dehn fillings of type $F$ pairs and, more generally, how $\op{ecd}$ behaves under $F_\infty$ Dehn fillings.
Our spectral sequence provides a partial answer for sufficiently long fillings.

\begin{restatable}{cor}{DimPreserve}\label{cor: Dim Preserve}
Suppose $(G,\mc{P})$ is a type $F_\infty$ relatively hyperbolic group pair.
Let $n=\op{ecd}(G,\mc{P})$.
Then, for all sufficiently long $F_\infty$ Dehn fillings,  $H^n(\bar{G},\bar{\mc{P}};\Z\bar{G})\cong H_0(K;H^n(G,\mc{P};\Z G))$ and $H^N(\bar{G},\bar{\mc{P}};\Z\bar{G})=0$ for $N>n$.
In particular, if $H^n(G,\mc{P};\Z G)\cong\Z$, then $\op{ecd}(G,\mc{P})=\op{ecd}(\bar{G},\bar{\mc{P}})$.
\end{restatable}
\begin{proof}
In the spectral sequence of \refthm{thm: Main Thm}, the second page has nonzero terms only in the first quadrant.
So $E^2_{0,0}=E^\infty_{0,0}$ and convergence implies that there is the isomorphism $H^n(\bar{G},\bar{\mc{P}};\Z\bar{G})\cong H_0(K;H^n(G,\mc{P};\Z G))$.
Additionally, $E^2_{pq}=0$ when $p+q<0$ so $H^N(\bar{G},\bar{\mc{P}};\Z\bar{G})=0$ when $N>n$.
\end{proof}

\subsection{Duality Pairs}
Duality pairs, studied by Bieri and Eckmann in \cite{BE}, are pairs whose homology and cohomology admit cap product isomorphisms resembling Poincar\'e duality.
One can also think of such pairs as those whose cohomology groups $H^i(G,\mc{P};\Z G)$ behave nicely.
Specifically, if $(G,\mc{P})$ is a duality pair, then it has finite cohomological dimension and $H^i(G,\mc{P};\Z G)=0$ for all $i\neq\op{cd}(G,\mc{P})$.
So when $(G,\mc{P})$ is a duality pair, the spectral sequence degenerates on the second page; everything outside the row $E^2_{*0}$ vanishes and \refthm{thm: Main Thm} simplifies into the following statement.

\begin{restatable}{cor}{ClassicalCase}\label{cor: Duality Case}
Suppose $(G,\mc{P})$ is a relatively hyperbolic duality pair and let $n=\op{cd}(G,\mc{P})$.
Then, for all sufficiently long $F_\infty$ Dehn fillings $(\bar{G},\bar{\mc{P}})$,
\[
H^{n-i}\left(\bar{G},\bar{\mc{P}};\Z\bar{G}\right)\cong H_i(K;H^n(G,\mc{P};\Z G)).
\]
\end{restatable}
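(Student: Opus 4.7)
The plan is to argue that, when $(G,\mc{P})$ is a duality pair, the spectral sequence of \refthm{thm: Main Thm} degenerates on the $E^2$ page and the asserted isomorphism can be read off directly.

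First, I would recall from \cite{BE} that a duality pair of dimension $n$ satisfies $H^i(G,\mc{P};\Z G) = 0$ for every $i \neq n$, while $H^n(G,\mc{P};\Z G)$ is the (nonzero) dualizing module. In particular $n = \op{cd}(G,\mc{P}) = \op{ecd}(G,\mc{P})$, so \refthm{thm: Main Thm} applies with this $n$ (after noting that duality together with relative hyperbolicity puts us in the $F_\infty$ setting required for the theorem). Substituting the vanishing into
\[
E^2_{pq} = H_p\bigl(K;\, H^{n-q}(G,\mc{P};\Z G)\bigr)
\]
shows $E^2_{pq} = 0$ unless $q = 0$, so the second page is concentrated in the single row $q = 0$.

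Next, I would verify that every higher differential must vanish for bidegree reasons. Since $d^r$ has bidegree $(-r, r-1)$ with $r \geq 2$, a differential out of $(p,0)$ lands at $(p-r,\, r-1)$ with $q$-coordinate $r - 1 \geq 1$, so its target is zero; likewise, a differential into $(p,0)$ originates at $(p+r,\, 1-r)$ with $q$-coordinate $1 - r \leq -1$, so its source is zero. Consequently $E^\infty_{p,0} = E^2_{p,0}$ and every other entry of $E^\infty$ is zero.

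Finally, convergence furnishes a filtration of $H^{n-(p+q)}(\bar{G},\bar{\mc{P}};\Z\bar{G})$ whose associated graded is $\bigoplus_{p+q = i} E^\infty_{pq}$. Since only the $q = 0$ summand survives in each total degree $i$, the filtration is trivial and yields the single isomorphism
\[
H^{n-i}(\bar{G},\bar{\mc{P}};\Z\bar{G}) \cong H_i\bigl(K;\, H^n(G,\mc{P};\Z G)\bigr),
\]
as claimed. There is no substantial obstacle here; the main point to double-check is that the duality hypothesis supplies the $F_\infty$ condition needed to invoke \refthm{thm: Main Thm}, which is a matter of bookkeeping with the standing definitions rather than a genuine difficulty.
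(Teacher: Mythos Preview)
Your argument is correct and matches the paper's own reasoning: the paper simply observes that for a duality pair the spectral sequence of \refthm{thm: Main Thm} is concentrated in the row $E^2_{*0}$ and hence degenerates, yielding the stated isomorphism. Your write-up just makes the bidegree check and the filtration collapse explicit.
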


\subsection{Generalizations}
Many of the results in this paper on relatively hyperbolic groups can be generalized to the case of $(G,\mc{P})$ where $\mc{P}$ is a hyperbolically embedded family of subgroups (see \cite{DGO} for the definition and properties).
In this general case, however, we do not have a geometric interpretation of $H^n(G,\mc{P};\Z G)$.
Nonetheless we will include these more general statements in case they become useful for future works.
These statements can be safely ignored by readers interested only in relatively hyperbolic groups.

\subsection{Outline}
We begin by introducing some definitions and summarizing some results of \cite{MW} in \refsec{sec: prelim}.
The space $\mc{X}(G,\mc{P})$ is introduced in \refdef{def: contractible cusped space} which will allow us to use topological tools in order to study $H^i(G,\mc{P};\Z G)$.
In \refsec{sec: Dehn Filling}, we review Dehn fillings of group pairs.
The main result here is \refprop{prop: homotopy of cusped space/K}.
This section contains most of the geometry used in this paper.
In \refsec{sec: homalg}, we use homological algebra to prove \refthm{thm: Main Thm}.

\subsection{Acknowledgements}
The author would like to thank Jason Manning for helpful conversations and for pointing out an error in a very early version of this work.
The author would also like to thank Bin Sun for his comments on a previous version of this work.
Finally, the author would like to thank the referees for their comments and their patience.
The author was partially supported by the National Science Foundation, grant DMS-1462263.

\section{Preliminaries}\label{sec: prelim}

\subsection{Cohomology with Compact Support}
We briefly introduce some results on cohomology with compact support that will be needed later.
We refer to \cite[Appendix]{MW} for a more detailed discussion.

\begin{theorem}\label{thm: LES cohom compact supp}
Suppose $X$ is locally contractible, locally compact Hausdorff space and let $V$ be a locally contractible, closed subspace with locally contractible complement.
Let $A$ be an abelian group.
Then, there is the following long exact sequence where $\iota^*$ is induced by the inclusion $\iota:F\inj X$.
\[
\cdots H_c^i(X\setminus V;A)\rightarrow H_c^i(X;A)\rightarrow H_c^i(V;A)\rightarrow H_c^{i+1}(X\setminus V;A)\rightarrow\cdots
\]
\end{theorem}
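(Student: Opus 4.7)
The plan is to work sheaf-theoretically on $X$. Write $U=X\setminus V$ for the open complement, and let $j:U\hookrightarrow X$ and $\iota:V\hookrightarrow X$ denote the open and closed inclusions, respectively. For any abelian group $A$, there is a short exact sequence of sheaves on $X$,
\[
0 \longrightarrow j_{!}\,A_U \longrightarrow A_X \longrightarrow \iota_{*}\,A_V \longrightarrow 0,
\]
where $A_Y$ denotes the constant sheaf on $Y$ with stalk $A$. Exactness is immediate on stalks: at a point of $U$ the sequence reduces to $0\to A\to A\to 0\to 0$, while at a point of $V$ it reduces to $0\to 0\to A\to A\to 0$.

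Next, I apply the right-derived functor $R\Gamma_c(X;-)$ of compactly supported global sections, which produces a long exact sequence
\[
\cdots \to H^i_c(X;j_{!}A_U) \to H^i_c(X;A_X) \to H^i_c(X;\iota_{*}A_V) \to H^{i+1}_c(X;j_{!}A_U) \to \cdots.
\]
The three groups above can be identified with the targeted compactly supported cohomology groups: clearly $H^i_c(X;A_X) = H^i_c(X;A)$, and the functorial identities $\Gamma_c(X;j_{!}\mc{F}) = \Gamma_c(U;\mc{F})$ for $j$ open and $\Gamma_c(X;\iota_{*}\mc{G}) = \Gamma_c(V;\mc{G})$ for $\iota$ closed, combined with the fact that $j_{!}$ and $\iota_{*}$ (in the closed case) send soft (or flabby) resolutions to soft (or flabby) resolutions on a locally compact Hausdorff space, yield the identifications $H^i_c(X;j_{!}A_U)\cong H^i_c(U;A)$ and $H^i_c(X;\iota_{*}A_V)\cong H^i_c(V;A)$.

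The role of the three local contractibility hypotheses is to bridge sheaf cohomology with whatever version of $H^*_c$ is in use elsewhere in \cite{MW}: on a locally contractible, locally compact Hausdorff space, sheaf cohomology with compact supports of the constant sheaf $A$ agrees with the Alexander--Spanier (equivalently \v{C}ech) compactly supported cohomology with coefficients in $A$. Applying this comparison on each of $X$, $V$, and $U$ transfers the long exact sequence above into the stated form, and naturality with respect to the closed inclusion $\iota$ identifies the middle arrow with $\iota^{*}$. The main step I expect to require care is matching the sheaf-theoretic connecting homomorphism $H^i_c(V;A)\to H^{i+1}_c(U;A)$ with the boundary map one would construct from the usual relative-pair viewpoint; I would handle this by unwinding both constructions through soft resolutions and exploiting local contractibility to represent classes by small open covers, making the resulting comparison natural in all three spaces.
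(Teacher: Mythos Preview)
The paper does not actually prove this theorem: it is stated in the preliminaries section without proof, with the reader referred to \cite[Appendix]{MW} for details. So there is no in-paper argument to compare against.

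Your sheaf-theoretic argument is correct and is the standard route to this long exact sequence. The short exact sequence $0\to j_!A_U\to A_X\to\iota_*A_V\to 0$ together with the identifications $H^i_c(X;j_!A_U)\cong H^i_c(U;A)$ and $H^i_c(X;\iota_*A_V)\cong H^i_c(V;A)$ (the latter using that $\iota_*$ for a closed inclusion preserves $c$-softness on a locally compact Hausdorff space) is exactly what one finds in, e.g., Iversen or Bredon. Your use of local contractibility to pass from sheaf cohomology of the constant sheaf to singular or \v{C}ech cohomology with compact supports is also the correct bridge. The only caveat is that your final paragraph about matching connecting homomorphisms is unnecessary for the theorem as stated: the statement only asserts the existence of \emph{some} long exact sequence with the indicated groups and with the middle map equal to $\iota^*$; it does not pin down the boundary map against any other construction. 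You can safely drop that discussion.
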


\begin{cor}\label{cor: prod with half open interval}
If $X$ is a locally contractible, locally compact Hausdorff space, then $H_c^i([0,1)\times X;A)=0$ for all $i\ge0$ and all coefficients $A$.
\end{cor}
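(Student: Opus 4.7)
The plan is to apply \refthm{thm: LES cohom compact supp} to the ambient space $[0,1]\times X$ with closed subspace $V=\{1\}\times X$, whose complement is precisely $[0,1)\times X$. All three spaces are locally contractible and locally compact Hausdorff (products and subspaces behave well under these hypotheses), so the theorem supplies a long exact sequence
\[
\cdots\to H_c^i([0,1)\times X;A)\to H_c^i([0,1]\times X;A)\xrightarrow{\iota^*} H_c^i(\{1\}\times X;A)\to H_c^{i+1}([0,1)\times X;A)\to\cdots
\]
where $\iota^*$ is induced by the inclusion $\iota\colon\{1\}\times X\hookrightarrow[0,1]\times X$.

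The next step is to show that $\iota^*$ is an isomorphism in every degree, which by exactness immediately forces $H_c^i([0,1)\times X;A)=0$ for all $i\ge 0$. The projection $\pi\colon[0,1]\times X\to \{1\}\times X$ is proper because $[0,1]$ is compact, and clearly $\pi\circ\iota=\op{id}$. The other composition $\iota\circ\pi$ is properly homotopic to the identity on $[0,1]\times X$ via the straight-line homotopy
\[
H\colon[0,1]\times X\times[0,1]\to[0,1]\times X,\qquad H((s,x),t)=((1-t)s+t,\,x).
\]
Since $H$ preserves the $X$-coordinate and the $[0,1]$-coordinate stays in a compact interval, the preimage of any compact set is contained in $[0,1]\times K\times[0,1]$ for a compact $K\subset X$, hence is compact; so $H$ is proper. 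Appealing to the proper homotopy invariance of compactly supported cohomology for locally contractible, locally compact Hausdorff spaces (discussed in the appendix of \cite{MW}), we conclude that $\iota^*$ is an isomorphism.

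The only real obstacle is the verification that the straight-line homotopy is proper, and this is essentially automatic once one notes that the $X$-coordinate is preserved and the interval coordinate lives in a compact range. Everything else is a direct appeal to \refthm{thm: LES cohom compact supp} and to proper homotopy invariance.
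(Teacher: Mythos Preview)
Your proof is correct and follows exactly the approach of the paper: apply \refthm{thm: LES cohom compact supp} to the pair $\bigl([0,1]\times X,\{1\}\times X\bigr)$ and use that $\{1\}\times X\hookrightarrow[0,1]\times X$ is a proper homotopy equivalence. You have simply supplied the details of the proper homotopy that the paper leaves implicit.
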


\begin{proof}
This follows from \refthm{thm: LES cohom compact supp} and the fact that $\{1\}\times X\inj[0,1]\times X$ is a proper homotopy equivalence.
\end{proof}

\subsection{Relative Cohomology}
Here, we introduce some definitions and results of \cite{BE} on relative group cohomology and we summarize some results in \cite{MW}.

\begin{definition}\label{def: group pair, comp gen set}
A \emph{group pair} $(G,\mc{P})$ is a group $G$ and a finite set of subgroups $\mc{P}=\{P_i\}_{i\in I}$.
\end{definition}

\begin{definition}\label{def: type F, F infty}
$G$ is \emph{of type $F$} (resp. \emph{of type $F_\infty$}) if it has a classifying space with finitely many cells (resp. finitely many cells in each dimension).
A group pair $(G,\mc{P})$ is \emph{of type $F$} (resp. \emph{of type $F_\infty$}) if $G$ and each $P_i\in\mc{P}$ are of type $F$ (resp. of type $F_\infty$).
\end{definition}

\begin{definition}\label{def: hom and cohom of group pair}
For a group pair $(G,\mc{P})$, let $\Z G/\mc{P}:=\oplus_{P\in\mc{P}}\Z G/P$ and let $\epsilon:\Z G/\mc{P}\rightarrow\Z$ be the augmentation map.
Let $\Delta:=\ker(\epsilon)$.
Then for any $\Z G$-module $M$, the \emph{relative homology} and \emph{relative cohomology} groups are
\[
H_i(G,\mc{P};M):=\op{Tor}_{i-1}^G(\Delta,M)\hspace{1 cm}H^i(G,\mc{P};M):=\op{Ext}_G^{i-1}(\Delta,M)
\]
\end{definition}

Letting $H_i(\mc{P};M):=\oplus_{P\in\mc{P}}H_i(P;M)$ and $H^i(\mc{P};M):=\prod_{P\in\mc{P}}H^i(P;M)$, there are long exact sequences
\[
...\rightarrow H_i(\mc{P};M)\rightarrow H_i(G;M)\rightarrow H_i(G,\mc{P};M)\rightarrow H_{i-1}(\mc{P};M)\rightarrow...
\]
\[
...\rightarrow H^i(G,\mc{P};M)\rightarrow H^i(G;M)\rightarrow H^i(\mc{P};M)\rightarrow H^{i+1}(G,\mc{P};M)\rightarrow...
\]

\begin{definition}\label{def: Eilenberg MacLane pair}
If $G$ is a group, we will use $BG$ to denote its classifying space and $EG$ to denote the universal cover of $BG$.
These spaces will be subject to the finiteness conditions imposed on $G$ (for instance, if $G$ is of type $F$, we will assume $BG$ is a finite CW complex).
Let $(G,\mc{P})$ be a group pair.
An \emph{Eilenberg-MacLane pair for $(G,\mc{P})$} is a pair $(X,Y)$ where $X=BG$ and $Y=\sqcup_{P\in\mc{P}}BP$ is a subspace of $X$ such that the inclusion $BP\inj X$ induces the inclusion $P\inj G$ (after a suitable choice of path between basepoints).
\end{definition}

\subsection{Cusped Spaces}

\begin{definition}\label{def: contractible cusped space}
Let $(G,\mc{P})$ be a type $F_\infty$ group pair where $\mc{P}$ is finite and let $(X,Y)$ be an Eilenberg-MacLane pair for $(G,\mc{P})$ such that $X$ has finitely many cells in each dimension.
Let $\tilde{X}$ be the universal cover of $X$ and let $\tilde{Y}\subseteq\tilde{X}$ be the preimage of $Y$ under the projection $\tilde{X}\surj X$.
Then, the \emph{contractible cusped space for $(G,\mc{P})$} is
\[
\mc{X}(G,\mc{P}):=\tilde{X}\cup\left([0,\infty)\times\tilde{Y}\right)
\]
where $\{0\}\times\tilde{Y}$ is identified with $\tilde{Y}\subseteq\tilde{X}$.

The \emph{$N$-connected cusped space for $(G,\mc{P})$} is the subspace
\[
\tilde{X}^{(N+1)}\bigcup\left([0,\infty)\times\tilde{Y}^{(N+1)}\right)
\]
which we denote $\mc{X}(G,\mc{P},N)$.
Here, if $Z$ is a CW complex, $Z^{(N)}$ denotes the $N$-skeleton of $Z$.
\end{definition}


\begin{remark}
In \cite{MW}, $\mc{X}(G,\mc{P},N)$ is equipped with a metric.
For our purposes, it suffices to consider $\mc{X}(G,\mc{P},N)$ as a CW-complex.
\end{remark}

In \cite{MW}, the following result was proved.
\begin{prop}\label{prop: cohom of N-conn cusped spaces}
For a type $F_\infty$ group pair $(G,\mc{P})$ and all $N\ge0$, $H^i(G,\mc{P};\Z G)\cong H_c^i(\mc{X}(G,\mc{P},N);\Z)$ for $i\le N$.
\end{prop}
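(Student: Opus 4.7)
The idea is to work directly with cellular cochain complexes and bypass any five-lemma comparison. Compactly supported cohomology does not see the infinite cusp, so $H_c^i$ of $\mc{X}(G,\mc{P},N)$ reduces to that of the complement $\tilde X^{(N+1)}\setminus\tilde Y^{(N+1)}$, and this complement is computed by the same cellular $\Hom$-complex that defines $H^i(G,\mc{P};\Z G)$.

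For the cusp-vanishing step, I would take the closed subspace $V=[0,\infty)\times\tilde Y^{(N+1)}\subset\mc{X}(G,\mc{P},N)$, whose complement is $\tilde X^{(N+1)}\setminus\tilde Y^{(N+1)}$. Since $[0,\infty)$ is homeomorphic to $[0,1)$, \refcor{cor: prod with half open interval} gives $H_c^*(V;\Z)=0$, and so \refthm{thm: LES cohom compact supp} collapses to
\[
H_c^i(\mc{X}(G,\mc{P},N);\Z)\cong H_c^i\bigl(\tilde X^{(N+1)}\setminus\tilde Y^{(N+1)};\Z\bigr)
\]
for every $i\ge 0$. For the cellular identification, consider the relative chain complex $C_*(\tilde X,\tilde Y):=C_*(\tilde X)/C_*(\tilde Y)$. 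Each $C_i(\tilde X,\tilde Y)$ is a free $\Z G$-module, finitely generated thanks to type $F_\infty$ and to the induction formula $C_*(\tilde Y)=\bigoplus_{P\in\mc{P}}\Z G\otimes_{\Z P}C_*(EP)$. Using contractibility of $\tilde X$ and of each component of $\tilde Y$, the homology long exact sequence of the pair pins down $H_1(C_*(\tilde X,\tilde Y))=\Delta$ with all other homology zero, so $C_*(\tilde X,\tilde Y)$ is quasi-isomorphic to $\Delta$ placed in degree one. Consequently, applying $\Hom_{\Z G}(-,\Z G)$ yields a cochain complex whose $i$th cohomology is $\op{Ext}^{i-1}(\Delta,\Z G)=H^i(G,\mc{P};\Z G)$. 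This same $\Hom$-complex is the compactly supported cellular cochain complex $C_c^*(\tilde X\setminus\tilde Y;\Z)$, giving $H^i(G,\mc{P};\Z G)\cong H_c^i(\tilde X\setminus\tilde Y;\Z)$.

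To conclude, note that for $i\le N$ the $\Hom$-complex in degrees $\le N+1$ depends only on cells of dimension $\le N+1$, so passing between the skeletal pair $(\tilde X^{(N+1)},\tilde Y^{(N+1)})$ and the full pair $(\tilde X,\tilde Y)$ does not affect $H_c^i$ in this range. Combining the three observations yields the stated isomorphism. The step I expect to require the most care is the cellular identification in the middle: verifying that $C_*(\tilde X,\tilde Y)$ has the claimed homology and that its quasi-isomorphism class computes $\op{Ext}^{*-1}(\Delta,\Z G)$ requires carefully tracking the $\Z G$-module structures through the homology long exact sequence of the pair and checking that the induced augmentation $H_0(C_*(\tilde Y))\to H_0(C_*(\tilde X))$ is precisely the map $\Z G/\mc{P}\to\Z$ from \refdef{def: hom and cohom of group pair}.
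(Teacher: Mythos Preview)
The paper does not prove this proposition; it is quoted from \cite{MW} with the sentence ``In \cite{MW}, the following result was proved,'' so there is no in-paper argument to compare your approach against.

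Your outline is correct and would yield a self-contained proof. The two main steps are both sound: killing the cusp via \refcor{cor: prod with half open interval} and \refthm{thm: LES cohom compact supp} applied to $V=[0,\infty)\times\tilde Y^{(N+1)}$, and then identifying $H_c^*$ of the open complement $\tilde X^{(N+1)}\setminus\tilde Y^{(N+1)}$ with the cohomology of $\Hom_{\Z G}(C_*(\tilde X,\tilde Y),\Z G)$. For the second step, since $C_*(\tilde X,\tilde Y)$ is a bounded-below complex of finitely generated projective $\Z G$-modules with homology $\Delta$ concentrated in degree~$1$, it is $K$-projective and hence $\Hom_{\Z G}(C_*(\tilde X,\tilde Y),\Z G)$ computes $\op{Ext}^{*-1}_G(\Delta,\Z G)=H^*(G,\mc P;\Z G)$ even when $C_0(\tilde X,\tilde Y)\neq 0$; your final paragraph correctly flags this as the point needing care. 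The skeletal truncation argument in your last paragraph is also right: cellular $H_c^i$ for $i\le N$ depends only on cells of dimension $\le N+1$.

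One minor remark: identifying the cellular complex $\Hom_{\Z G}(C_*(\tilde X,\tilde Y),\Z G)$ with compactly supported cochains on the \emph{open} set $\tilde X^{(N+1)}\setminus\tilde Y^{(N+1)}$ (which is not a subcomplex) still goes through a comparison of the cellular long exact sequence of the pair with the topological sequence of \refthm{thm: LES cohom compact supp}, so your claim to ``bypass any five-lemma comparison'' is a slight overstatement---though the comparison in question is entirely routine.
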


Using these spaces, \cite{MW} proves the following theorem about relatively hyperbolic groups.

\begin{theorem}\label{thm: MW main theorem}
Let $(G,\mc{P})$ be a type $F_\infty$ relatively hyperbolic group pair.
Then, $H^i(G,\mc{P};\Z G)\cong\check{H}^{i-1}(\partial(G,\mc{P});\Z)$ where the right hand side is reduced \v{C}ech cohomology.
\end{theorem}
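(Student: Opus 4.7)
The strategy is to combine \refprop{prop: cohom of N-conn cusped spaces} with a compactification argument. For $N$ much larger than $i$, \refprop{prop: cohom of N-conn cusped spaces} identifies $H^i(G,\mc{P};\Z G)$ with $H_c^i(\mc{X}(G,\mc{P},N);\Z)$, so the task reduces to showing
\[
H_c^i(\mc{X}(G,\mc{P},N);\Z)\cong\check{H}^{i-1}(\partial(G,\mc{P});\Z).
\]
The plan is to build a compactification $\overline{\mc{X}}=\mc{X}(G,\mc{P},N)\cup\partial(G,\mc{P})$ and apply the long exact sequence of \refthm{thm: LES cohom compact supp}.

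First I would construct the compactification using the relatively hyperbolic structure: sequences leaving every compact set along a given horoball of $\mc{X}$ (with the $[0,\infty)$-coordinate tending to infinity) should converge to the parabolic point associated to the corresponding coset, while sequences escaping through the thick part of $\mc{X}$ should converge to the conical limit points of the Gromov boundary of the underlying cusped graph. The verifications to perform are that $\overline{\mc{X}}$ is compact Hausdorff, that the subspace topology on $\partial(G,\mc{P})$ agrees with Bowditch's, and that the inclusion $\partial(G,\mc{P})\inj\overline{\mc{X}}$ satisfies the local hypotheses of \refthm{thm: LES cohom compact supp}.

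Given such a compactification, \refthm{thm: LES cohom compact supp} produces the sequence
\[
\cdots\to H_c^{i-1}(\partial(G,\mc{P});\Z)\to H_c^i(\mc{X}(G,\mc{P},N);\Z)\to H_c^i(\overline{\mc{X}};\Z)\to H_c^i(\partial(G,\mc{P});\Z)\to\cdots
\]
If $\overline{\mc{X}}$ is compact and $N$-connected (the parabolic compactification filling in each horoball end with a single point without creating new cohomology in low degrees), then $H_c^i(\overline{\mc{X}};\Z)=H^i(\overline{\mc{X}};\Z)$ vanishes for $0<i\le N$. Because $\partial(G,\mc{P})$ is compact metrizable, $H_c^*(\partial(G,\mc{P});\Z)$ coincides with \v{C}ech cohomology in positive degrees; the degree shift and the passage to \emph{reduced} \v{C}ech cohomology come from the bottom of the sequence, where $H^0(\overline{\mc{X}};\Z)=\Z$ surjects onto $H^0(\partial(G,\mc{P});\Z)$.

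The main obstacle is regularity of the compactification. In the type $F$ case one expects $\partial(G,\mc{P})$ to be a $Z$-set in $\overline{\mc{X}}$ and the Bestvina-Mess picture for hyperbolic groups to adapt directly. In the merely type $F_\infty$ setting this $Z$-set property generally fails and $\overline{\mc{X}}$ need not be an ANR, so one cannot simply quote the classical machinery. Instead, working at each fixed $N$, the truncation of cells above dimension $N+1$ should provide enough local contractibility for \refthm{thm: LES cohom compact supp} to apply in degrees $\le N$; independence of $N$ on the $H^i(G,\mc{P};\Z G)$ side is already built into \refprop{prop: cohom of N-conn cusped spaces}, and independence on the \v{C}ech side uses that $\partial(G,\mc{P})$ does not depend on $N$. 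Controlling the horoball geometry so that each horoball collapses to a single parabolic point without destroying compactness or Hausdorffness, and ensuring the necessary local contractibility near $\partial$, is the most delicate part of the argument.
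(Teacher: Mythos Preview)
This theorem is not proved in the present paper: it is quoted as a result of \cite{MW}, introduced with the sentence ``Using these spaces, \cite{MW} proves the following theorem about relatively hyperbolic groups.'' There is therefore no proof here to compare your proposal against.

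That said, your outline is broadly the right shape for how such a result is typically established, and you have correctly identified the genuine difficulty: in the type $F_\infty$ (as opposed to type $F$) setting the boundary need not sit as a $Z$-set in a compactification of $\mc{X}(G,\mc{P},N)$, so one cannot simply invoke the Bestvina--Mess machinery. Your proposed workaround --- work at a fixed $N$, use the finite-dimensionality of the $(N+1)$-skeleton to get enough local regularity for \refthm{thm: LES cohom compact supp}, and then let $N\to\infty$ --- is plausible in spirit, but as written it is a strategy rather than a proof. In particular, you have not actually verified that $\overline{\mc{X}}$ is locally contractible near $\partial(G,\mc{P})$, that the complement of $\partial(G,\mc{P})$ is locally contractible in the sense required by \refthm{thm: LES cohom compact supp}, or that $\overline{\mc{X}}$ is acyclic in the relevant range; each of these requires real work. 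If you want to turn this into an actual argument, you should consult \cite{MW} directly for the details of the compactification and the cohomological computations.
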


Since $\mc{X}(G,\mc{P})$ is not necessarily a locally compact space, it does not make sense to discuss compactly supported cohomology.
Because we assume $\mc{P}$ is finite and all groups are type $F_\infty$, this space has locally finite skeleta in each dimension.
So it has a cochain complex of compactly supported cellular cochains.
Thus we can make the following definition.

\begin{definition}\label{def: compactly supp cohom for contractible cusped space}
Suppose $X$ is a CW complex such that each skeleton is locally finite.
Let $C_c^*(X;A)$ be the cochain complex of compactly supported cellular cochains on $X$ with coefficients in an abelian group $A$.
Define
\[
H_c^i(X;A):=H^i(C_c^*(X;A)).
\]
\end{definition}

\begin{prop}\label{prop: cohom contractible cusped space}
Suppose $(G,\mc{P})$ is a type $F_\infty$ group pair.
Then for all $i$, $H^i(G,\mc{P};\Z G)\cong H_c^i(\mc{X}(G,\mc{P});\Z)$.
\end{prop}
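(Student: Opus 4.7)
The plan is to compare $\mc{X}(G,\mc{P})$ with the $N$-connected cusped spaces $\mc{X}(G,\mc{P},N)$ from \refdef{def: N connected horoball and cusped space} and invoke \refprop{prop: cohom of N-conn cusped spaces}. The key observation is that $\mc{X}(G,\mc{P},N)$ is a CW subcomplex of $\mc{X}(G,\mc{P})$ whose complementary cells all have large dimension, so the compactly supported cellular cochain complexes will agree in a range of degrees.

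First I would choose a common Eilenberg-MacLane pair $(X,Y)$ satisfying the finiteness conditions of both \refdef{def: N connected horoball and cusped space} and \refdef{def: contractible cusped space}; such a pair exists because $\mc{P}$ is finite and each $P\in\mc{P}$ is type $F_\infty$, so $Y=\sqcup_{P\in\mc{P}}BP$ automatically has finitely many cells in each dimension. With this common choice, $\mc{X}(G,\mc{P},N)=\tilde X^{(N+1)}\cup[0,\infty)\times\tilde Y^{(N+1)}$ is a genuine CW subcomplex of $\mc{X}(G,\mc{P})=\tilde X\cup[0,\infty)\times\tilde Y$, where $[0,\infty)\times\tilde Y$ is given the product CW structure (with $[0,\infty)$ decomposed into unit $0$- and $1$-cells). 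A cell of $\mc{X}(G,\mc{P})$ outside this subcomplex is either a cell of $\tilde X$ of dimension at least $N+2$, or a product cell $e\times f$ with $f$ a cell of $\tilde Y$ of dimension at least $N+2$; in either case its dimension is at least $N+2$.

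Next, for any fixed $i\ge 0$ I would take $N\ge i$ and argue that restriction induces an isomorphism of compactly supported cellular cochain complexes
\[
C_c^j(\mc{X}(G,\mc{P});\Z)\xrightarrow{\ \cong\ }C_c^j(\mc{X}(G,\mc{P},N);\Z)\qquad\text{for every }j\le N+1.
\]
Indeed, a compactly supported $j$-cochain is determined by its values on $j$-cells; by the previous paragraph every $j$-cell of $\mc{X}(G,\mc{P})$ with $j\le N+1$ already lies in $\mc{X}(G,\mc{P},N)$, so restriction is a bijection on $j$-cells and extension by zero provides the inverse. These isomorphisms commute with the cellular coboundary, so taking cohomology yields $H_c^j(\mc{X}(G,\mc{P});\Z)\cong H_c^j(\mc{X}(G,\mc{P},N);\Z)$ for all $j\le N$. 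Composing with \refprop{prop: cohom of N-conn cusped spaces} gives $H^i(G,\mc{P};\Z G)\cong H_c^i(\mc{X}(G,\mc{P});\Z)$, and since $i$ was arbitrary the proposition follows.

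There is no substantive obstacle beyond bookkeeping: the only points requiring care are the compatibility of the CW structures on $\mc{X}(G,\mc{P})$ and $\mc{X}(G,\mc{P},N)$ that identifies the latter as a subcomplex of the former, and the sharp dimension count $\ge N+2$ for the complementary cells, which is what makes the cochain-level comparison go through in the full range $j\le N+1$.
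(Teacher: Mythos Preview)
Your proposal is correct and is essentially a detailed unpacking of the paper's own proof, which simply asserts that ``by considering the compactly supported cellular cochains'' one has $H_c^i(\mc{X}(G,\mc{P});\Z)\cong H_c^i(\mc{X}(G,\mc{P},N);\Z)$ for $i\le N$, then applies \refprop{prop: cohom of N-conn cusped spaces} and lets $N$ grow. Your dimension count on the complementary cells and the resulting identification of cochain complexes in low degrees is exactly the content hidden in the paper's one-line justification.
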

\begin{proof}
By considering the compactly supported cellular cochains, we have $H_c^i(\mc{X}(G,\mc{P});\Z)\cong H_c^i(\mc{X}(G,\mc{P},N);\Z)$ for $i\le N$.
By \refprop{prop: cohom of N-conn cusped spaces}, the right hand side is $H^i(G,\mc{P};\Z G)$ for $i\le N$.
The result follows from taking $N$ arbitrarily large.
\end{proof}

\subsection{Essential Cohomological Dimension}\label{sec: ecd}
\begin{definition}\label{def: ecd}
Let $G$ be a group.
Then, the \emph{essential cohomological dimension of $G$} is
\[
\op{ecd}G=\sup\{n\ge0|H^n(G;\Z G)\neq0\}.
\]
If $(G,\mc{P})$ is a group pair, then the \emph{essential cohomological dimension of $(G,\mc{P})$} is
\[
\op{ecd}(G,\mc{P})=\sup\{n\ge1|H^n(G,\mc{P};\Z G)\neq0\}.
\]
\end{definition}

\begin{example}
If $H$ is a finite index subgroup of $G$, then $H^n(G;\Z G)\cong H^n(H;\Z H)$ for all $n$.
So, $\op{ecd}$ is invariant under commensurability.
In particular, if $G$ is a finite group, $\op{cd}(G)$ is infinite but $\op{ecd}(G)=0$.
\end{example}

\begin{remark}
In the case when $G$ is of type $F_\infty$ with finite cohomological dimension, \cite[Proposition VIII.6.1]{Brown} and \cite[Proposition VIII.6.7]{Brown} say that $\op{cd}G=\op{ecd}G$.
More generally, if $G$ is of type $F_\infty$ with finite virtual cohomological dimension, then $\op{vcd}G=\op{ecd}G$.
\end{remark}

\begin{example}
If $G$ is a torsion free hyperbolic group, then $\op{cd}(G)=\op{ecd}(G)$.
\end{example}

\begin{remark}
Note that, the essential cohomological dimension of a pair is defined to be at least $1$, rather than $0$.
This is reasonable because relative group cohomology is defined with a dimension shift so $H^0(G,\mc{P};\Z G)$ does not have any meaning.
Moreover, \refconj{conj: dim boundary conjecture} would not be true if we allowed the essential cohomological dimension to be $0$.
Since $H^i(G,\{G\};\Z G)=0$ for all $i$ and $\partial(G,\{G\})$ is a point, we would have $\dim\partial(G,\{G\})=0$ and $\op{ecd}(G,\{G\})-1=-1$.
\end{remark}

For type $F_\infty$ relatively hyperbolic group pairs, we have the following result.

\begin{prop}\label{prop: ecd is finite}
If $(G,\mc{P})$ is a type $F_\infty$ relatively hyperbolic group pair, then $\op{ecd}(G,\mc{P})$ is finite.
\end{prop}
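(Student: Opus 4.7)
My plan is to use \refthm{thm: MW main theorem} to turn the question into one about the Bowditch boundary, and then bound the topological dimension of that boundary.

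By \refthm{thm: MW main theorem}, $H^i(G,\mc{P};\Z G)\cong\check{H}^{i-1}(\partial(G,\mc{P});\Z)$, so proving $\op{ecd}(G,\mc{P})<\infty$ reduces to showing that the reduced \v{C}ech cohomology of $\partial(G,\mc{P})$ vanishes in all sufficiently large degrees. Since $\partial(G,\mc{P})$ is compact and metrizable, this follows from a standard Alexandroff-type result provided that $\partial(G,\mc{P})$ has finite topological (covering) dimension.

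To obtain such a dimension bound, the plan is to realize $\partial(G,\mc{P})$ as the Gromov boundary of the cusped space $\mc{X}(G,\mc{P},N)$, which Groves--Manning show is $\delta$-hyperbolic once $N$ is sufficiently large. The cusped space admits a $G$-action with finitely many orbits of cells in each dimension, and the Bowditch boundary is identified with its Gromov boundary. A Bestvina--Mess-style argument on the cusped space should then produce a finite bound on the topological dimension of the boundary, completing the proof.

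The main obstacle will be carrying out the Bestvina--Mess argument under the weaker type $F_\infty$ hypothesis: classically this argument uses a finite-dimensional CW model, whereas here the peripherals (e.g., Thompson's group $F$) may have infinite cohomological dimension, making $\mc{X}(G,\mc{P})$ itself infinite-dimensional as a CW complex. The key point should be that the Gromov boundary depends only on large-scale geometry, so it agrees with the Gromov boundary of the $(N+1)$-dimensional truncation $\mc{X}(G,\mc{P},N)$, and one can apply Bestvina--Mess to this finite-dimensional model to obtain a uniform bound on $\dim\partial(G,\mc{P})$ independent of the (possibly infinite) CW-dimension of the full contractible cusped space.
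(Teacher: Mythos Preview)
Your approach is essentially the same as the paper's: invoke \refthm{thm: MW main theorem} and then use that $\partial(G,\mc{P})$ has finite topological dimension, so its \v{C}ech cohomology vanishes above that dimension. The only difference is that the paper simply cites finite-dimensionality of the Bowditch boundary as a known fact and stops there, whereas you go on to sketch a Bestvina--Mess-style justification on the truncated cusped space; that extra work is not needed for the paper's purposes, and your discussion of the $F_\infty$ obstacle, while thoughtful, is addressing a point the paper does not attempt to reprove.
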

\begin{proof}
This follows from \refthm{thm: MW main theorem} and the fact that $\partial(G,\mc{P})$ is finite dimensional.
\end{proof}

\begin{remark}
The author is not aware of an example of a group with infinite essential cohomological dimension.
\end{remark}

\subsection{Relatively Hyperbolic Groups}
In this subsection, we will briefly discuss relatively hyperbolic groups.
As we will import the necessary geometry rather than working with it explicitly, we will not give rigorous definitions here.

Suppose $(G,\mc{P})$ is a type $F_\infty$ group pair.
As mentioned previously, $\mc{X}(G,\mc{P},N)$ is equipped with a metric $d_N$ in \cite{MW}.
It is shown that $(\mc{X}(G,\mc{P},N),d_N)$ is quasi-isometric to the cusped space of \cite{GM08}.
Thus, $(G,\mc{P})$ is \emph{relatively hyperbolic} if and only if $(\mc{X}(G,\mc{P},N),d_N)$ is $\delta$-hyperbolic for some $\delta>0$.
If $(G,\mc{P})$ is relatively hyperbolic, the Gromov boundary of $\mc{X}(G,\mc{P},N)$ is the \emph{Bowditch boundary of $(G,\mc{P})$} and is denoted by $\partial(G,\mc{P})$.

\begin{remark}
We are only considering pairs of type $F_\infty$ in this paper.
For a more comprehensive and thorough description of relative hyperbolicity, see \cite{Hruska}.
\end{remark}

\begin{example}
Suppose that $G=\pi_1(M)$ where $M$ is a complete, finite volume hyperbolic $n$-manifold.
Let $\mc{P}$ be the collection of cusp subgroups of $M$.
Then, $(G,\mc{P})$ is relatively hyperbolic and $\partial(G,\mc{P})\cong S^{n-1}$.
\end{example}

\section{Dehn Filling}\label{sec: Dehn Filling}

\begin{definition}\label{def: Dehn Filling}
Given a group pair $(G,\mc{P})$, where $\mc{P}=\{P_1,...,P_m\}$, a \emph{Dehn filling} of $(G,\mc{P})$ is a pair $(G/K,\bar{\mc{P}})$ where $K$ is the normal closure in $G$ of normal subgroups $N_i\trianglelefteq P_i$ and $\bar{\mc{P}}$ is the family of images of $\mc{P}$ under this quotient.
The $N_i$ are called the \emph{filling kernels}.
\end{definition}

\begin{notation}
We will henceforth write $\bar{G}$ for $G/K$ and $\bar{P}_i$ for the image of $P_i$ under the quotient.
If we wish to emphasize the filling kernels, we write $G(N_1,...,N_m)$ for $G/K$.
\end{notation}

\begin{definition}\label{def: suff long Dehn filling}
Following \cite{GMS}, we say that a property holds for \emph{sufficiently long Dehn fillings} if there is a finite set $\mc{B}\subseteq G\setminus\{1\}$ such that the property holds for all $(G(N_1,...,N_m),\bar{\mc{P}})$ where $N_i\cap\mc{B}=\emptyset$ for each $i$.
\end{definition}

\begin{remark}
A reader unfamiliar with the above definitions can think of ``Dehn fillings'' as ``quotient of group pairs'' and ``sufficiently long Dehn fillings'' as ``most quotients of group pairs.''
\end{remark}

\begin{definition}\label{def: F infty Dehn filling}
Let $(G,\mc{P})$ be a group pair.
Then $(\bar{G},\bar{\mc{P}})$ is an \emph{$F_\infty$ Dehn filling} if $(\bar{G},\bar{\mc{P}})$ is a type $F_\infty$ group pair.
\end{definition}

To study the kernel of Dehn fillings, we use the Cohen-Lyndon property, which is studied by Sun in \cite{Sun}.

\begin{definition}\label{def: cohen lyndon property}
Let $G$ be a group and let $\{P_\lambda\}_{\lambda\in\Lambda}$ be a collection of subgroups.
Suppose that $\{N_\lambda\}_{\lambda\in\Lambda}$ is a collection of subgroups such that $N_\lambda\trianglelefteq P_\lambda$.
Let $K$ denote the normal closure of $\cup_{\lambda\in\Lambda}N_\lambda$ in $G$.
Then, the triple $(G,\{P_\lambda\}_{\lambda\in\Lambda},\{N_\lambda\}_{\lambda\in\Lambda})$ is said to have the \emph{Cohen-Lyndon property} if, for each $\lambda\in\Lambda$, there is a left transversal $T_\lambda$ of $P_\lambda K$ in $G$ such that
\[
K=*_{\lambda\in\Lambda}*_{t\in T_\lambda}N_\lambda^t.
\]
In this case, we say that $T_\lambda$ is the \emph{transversal for the triple $(G,\{P_\lambda\}_{\lambda\in\Lambda},\{N_\lambda\}_{\lambda\in\Lambda})$}.
\end{definition}

The following result in \cite{GMS} implies that sufficiently long Dehn fillings in relatively hyperbolic groups satisfy the Cohen-Lyndon property.

\begin{theorem}\label{thm: ker of Dehn filling}
Suppose $(G,\{P_1,...,P_m\})$ is relatively hyperbolic where $G$ and each $P_i$ are finitely generated.
Let $\mc{C}$ be the set of parabolic points of $\partial(G,\mc{P})$.
For a Dehn filling $\bar{G}=G(N_1,...,N_m)$, let $K=\op{ker}(G\surj\bar{G})$.
Then, for sufficiently long Dehn fillings, there is a subset $T\subseteq\mc{C}$ intersecting each $K$-orbit exactly once such that
\[
K=*_{t\in T}\left(K\cap\op{Stab}t\right)
\]
and each $K\cap\op{Stab}t$ is conjugate to some filling kernel $N_i$.
\end{theorem}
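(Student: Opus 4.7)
The plan is to deduce the theorem from the Cohen--Lyndon property (\refdef{def: cohen lyndon property}) of sufficiently long Dehn fillings, which is the deep input supplied by Groves, Manning and Sisto (and in greater generality by Sun), combined with a routine translation between left transversals of the subgroups $P_iK$ in $G$ and representatives of $K$-orbits on the set of parabolic points $\mc{C}$.

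For the translation, first invoke the classical Dehn filling theorem of Osin and Groves--Manning: for all sufficiently long fillings and each $i$, the map $P_i/N_i \inj \bar G$ is injective, i.e.\ $P_i \cap K = N_i$. Since $K\trianglelefteq G$, conjugation gives $K \cap gP_ig^{-1} = gN_ig^{-1}$ for every $g \in G$. Parabolic points of $\partial(G,\mc{P})$ are in $G$-equivariant bijection with $\bigsqcup_i G/P_i$, with the stabilizer of the coset $gP_i$ equal to $gP_ig^{-1}$. Because $K$ is normal in $G$, we have $gP_iK = KgP_i$, so left cosets of $P_iK$ in $G$ coincide with the double cosets $K\backslash G/P_i$, which parametrize the $K$-orbits on $G/P_i$. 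A choice of left transversal $T_i$ for each $P_iK$ therefore determines a transversal $T = \bigsqcup_i \{tP_i : t \in T_i\} \subseteq \mc{C}$ of the $K$-orbits, under which $K \cap \op{Stab}(tP_i) = tN_it^{-1}$ is a conjugate of the filling kernel $N_i$.

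The Cohen--Lyndon property for sufficiently long Dehn fillings of relatively hyperbolic pairs then produces the decomposition
\[
K \;=\; *_{i}\, *_{t \in T_i}\, t N_i t^{-1},
\]
which under the translation of the previous paragraph is exactly $K = *_{t \in T}(K \cap \op{Stab} t)$, as required.

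The main obstacle is verifying the Cohen--Lyndon property itself; this is really the content of the Dehn filling theorem in the relatively hyperbolic setting. The proof proceeds by analyzing the action of $K$ on the cusped space (or on a tree of pieces associated to it) to verify both that $P_i \cap K = N_i$ for sufficiently long fillings and that $K$ splits as the indicated free product of conjugates of filling kernels. Once that foundational result is in hand, the reformulation in terms of $K$-orbits of parabolic points is essentially bookkeeping, carried out in the preceding paragraph.
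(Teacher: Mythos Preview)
The paper does not prove \refthm{thm: ker of Dehn filling} at all; it is quoted verbatim as a result of Groves--Manning--Sisto (\cite{GMS}) and used as the \emph{input} from which the Cohen--Lyndon property (\refcor{cor: rel hyp CL}) is then deduced. Your proposal runs this implication in the opposite direction: you take the Cohen--Lyndon property as the black box and derive the free-product decomposition indexed by $K$-orbits of parabolic points from it. The bookkeeping you carry out (identifying $K$-orbits on $\mc{C}$ with left cosets of $P_iK$, hence with a transversal $T_i$) is correct, and indeed it is exactly the translation the paper performs in the proof of \refcor{cor: rel hyp CL}, only reversed.

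The danger is circularity. You attribute the Cohen--Lyndon property to ``Groves, Manning and Sisto,'' but in the paper's account what \cite{GMS} actually proves \emph{is} \refthm{thm: ker of Dehn filling}; Cohen--Lyndon is the corollary. So as written, your argument for \refthm{thm: ker of Dehn filling} appeals to a statement that the paper derives from \refthm{thm: ker of Dehn filling}. If instead you cite Sun's \refthm{thm: LC hyp emb} as the source of the Cohen--Lyndon property (which establishes it independently, for hyperbolically embedded families), then your argument is non-circular and yields a legitimate alternative proof. Either way, the substantive geometric content lives in the cited reference, and your contribution is the same coset-to-orbit dictionary the paper already records in \refcor{cor: rel hyp CL}.
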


\begin{cor}\label{cor: rel hyp CL}
Let $(G,\{P_1,...,P_m\})$ be as in \refthm{thm: ker of Dehn filling}.
Then sufficiently long Dehn fillings of $(G,\{P_1,...,P_m\})$ satisfy the Cohen-Lyndon property.
\end{cor}
\begin{proof}
Consider $\mc{X}(G,\mc{P},N)$ with the metric in \cite{MW}.
Every parabolic point is the boundary point of some $g\cdot \left([0,\infty)\times EP_i^{(N+1)}\right)\subseteq\mc{X}(G,\mc{P},N)$.
This follows from \cite[Section 2.5]{GMS} and the quasi-isometry from the Groves-Manning combinatorial cusped space to $\mc{X}(G,\mc{P},N)$ (see \cite[Proposition 3.10]{MW}).
Thus, if $c_i\in\mc{C}$ is the parabolic point with stabilizer $P_i$, then $\mc{C}=\sqcup_{i=1}^m\sqcup_{gP_i\in G/P_i}g\cdot c_i=\sqcup_{i=1}^m G/P_i$ as $G$-sets.
The set of $K$-orbits is $\sqcup_{i=1}^m K\backslash G/P_i=\sqcup_{i=1}^m G/KP_i$.
By \refthm{thm: ker of Dehn filling} above, there is a set $T\subseteq\mc{C}$ which is a transversal for the triple $(G,\{P_1,...,P_m\},\{N_1,...,N_m\})$ and satisfies the desired property.
\end{proof}

Although \refcor{cor: rel hyp CL} suffices to prove \refthm{thm: Main Thm}, enough machinery has been developed to prove a generalization.
In particular, there is the following result of \cite{Sun}.
\begin{theorem}\label{thm: LC hyp emb}
Suppose $G$ is a group with with $\{H_\lambda\}_{\lambda\in\Lambda}$ a hyperbolically embedded family of subgroups.
Then for sufficiently long Dehn fillings with filling kernels $N_\lambda\trianglelefteq H_\lambda$, the triple $(G,\{H_\lambda\}_{\lambda\in\Lambda},\{N_\lambda\}_{\lambda\in\Lambda})$ satisfies the Cohen-Lyndon property.
\end{theorem}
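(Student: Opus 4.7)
The plan is to reduce the statement to the rotating families machinery of Dahmani--Guirardel--Osin. Since $\{H_\lambda\}_{\lambda\in\Lambda}$ is hyperbolically embedded in $G$, there is a hyperbolic space $X$ on which $G$ acts with a distinguished $G$-invariant set of ``apices'' $\{v_\lambda\}_{\lambda\in\Lambda}$ so that $\op{Stab}(v_\lambda)=H_\lambda$ and so that the $G$-action is acylindrical once a suitable horoball-like neighborhood of the apex orbit is removed. Intuitively this plays the role, for hyperbolically embedded subgroups, of the cusped space that appears in the relatively hyperbolic case.

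First, I would show that when each filling kernel $N_\lambda$ avoids a large enough finite subset of $H_\lambda$, the collection of $G$-translates of the pairs $(N_\lambda,v_\lambda)$ forms a \emph{very rotating family} in the sense of DGO. Their main theorem would then produce a hyperbolic quotient space $\bar{X}$ on which $\bar{G}=G/K$ acts, and would show that $K$ is generated by elements that each fix one of the apices, subject only to relations coming from these rotations.

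Second, I would extract the Cohen-Lyndon decomposition from this rotation structure. The approach is to build a $K$-invariant simplicial ``skeleton'' $T$ whose vertices are the apices (i.e.\ $\sqcup_\lambda G/H_\lambda$) and whose edges record geometric adjacency in $X$, then use hyperbolicity together with the very rotating condition to argue that $K$ acts on $T$ with trivial edge stabilizers and with vertex stabilizers exactly $gN_\lambda g^{-1}$. Bass--Serre theory applied to this action then yields
\[
K=\ast_{\lambda\in\Lambda}\ast_{t\in T_\lambda}N_\lambda^t,
\]
where each $T_\lambda$ is a left transversal of $H_\lambda K$ in $G$ indexing the $K$-orbits on apices of type $\lambda$, which is exactly the Cohen-Lyndon property.

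The main obstacle is the first step: verifying the very rotating condition for sufficiently long fillings, which demands a careful choice of hyperbolic model and delicate combinatorial control over how translates of $N_\lambda$ displace points in $X$. Once this geometric input is in place, both the DGO quotient theorem and the Bass--Serre extraction are essentially formal, though one still has to confirm that the $K$-stabilizer of each apex is exactly $gN_\lambda g^{-1}$ rather than some strictly larger subgroup of $H_\lambda$.
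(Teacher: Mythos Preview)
The paper does not prove this theorem; it is quoted as a result of \cite{Sun}, so there is no ``paper's own proof'' to compare against.  Your strategy --- pass to the DGO cone-off space, verify that sufficiently deep filling kernels give a very rotating family, and then read off a free-product decomposition of $K$ via an action on a tree --- is indeed the circle of ideas underlying Sun's result, and the first step (very rotating families for deep enough fillings) is exactly \cite[Theorem~6.35]{DGO}.

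That said, your second step as written has a genuine gap.  The graph you propose, with vertex set the apex orbit and edges recording ``geometric adjacency in $X$'', has no reason to be a tree, and Bass--Serre theory applied to a non-simply-connected graph of groups yields nothing useful.  The tree that actually does the work in the DGO/Sun setting comes from the \emph{windmill} construction: one builds an exhaustion of $X$ by nested $K$-invariant subsets, and the associated Bass--Serre tree has vertices of two types (apices and complementary ``blade'' pieces), not just apices.  Furthermore, even granting a tree action with trivial edge stabilizers and vertex stabilizers $gN_\lambda g^{-1}$, Bass--Serre theory gives
\[
K \;\cong\; \Bigl(\ast_{\lambda}\ast_{t\in T_\lambda} N_\lambda^{t}\Bigr)\ast F
\]
with $F$ free of rank equal to the first Betti number of the quotient graph.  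The Cohen--Lyndon property requires $F=1$, i.e.\ that the quotient graph is itself a tree; this is true in the windmill picture but needs to be argued, and your sketch does not address it.
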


Suppose $(G,\mc{P})$ is a group pair and that $\left(\bar{G},\bar{\mc{P}}\right)$ is a Dehn filling satisfying the Cohen-Lyndon property and let $K$ denote the kernel of $G\rightarrow\bar{G}$.
It is immediate from the definition that, if $X$ is a classifying space for $G$ with universal cover $\tilde{X}$, then $\tilde{X}/K$ is homotopy equivalent to a wedge of classifying spaces for $N_i$.
We will need the following slightly stronger statement.

\begin{prop}\label{prop: homotopy of cusped space/K}
Suppose $\{P_1,...,P_m\}$ is a hyperbolically embedded family of subgroups of $G$.
Let $(X,Y)$ be an Eilenberg-MacLane pair for $(G,\{P_1,...,P_m\})$.
Let $\tilde{X}$ be the universal cover of $X$ and let $\tilde{Y}$ be the preimage of $Y$ under $\tilde{X}\surj X$.
If $(\bar{G},\bar{P})$ is a Dehn filling given by $\{N_1,...,N_m\}$, let $K=\ker(G\surj\bar{G})$
Define a space $V$ to be a vertex with an edge to each component of $\tilde{Y}/K$.
Then for sufficiently long Dehn fillings, there is a homotopy equivalence $f\colon V\rightarrow \tilde{X}/K$ such that $f$ restricts to the identity on $\tilde{Y}/K$.
\end{prop}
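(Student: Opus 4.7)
The plan is to realize both $V$ and $\tilde{X}/K$ as $K(K,1)$-spaces and to build $f$ so that on $\pi_1$ it matches the free-product decomposition of $K$ coming from Cohen-Lyndon. My first step is to identify the components of $\tilde{Y}/K$: components of $\tilde{Y}$ are indexed by $\bigsqcup_i G/P_i$, the component at $gP_i$ being a translate $g\widetilde{BP_i}$ of the contractible universal cover of $BP_i$. Using a Cohen-Lyndon transversal $T_i$, the components of $\tilde{Y}/K$ are indexed by pairs $(i,t)$ with $t\in T_i$, and the component $C_{(i,t)}$ is $t\widetilde{BP_i}/(K\cap tP_it^{-1})=t\widetilde{BP_i}/(tN_it^{-1})$, which is a classifying space for $tN_it^{-1}$.

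Next I would construct $f$ by making consistent basepoint choices upstairs in $\tilde{X}$. Choose $\tilde{*}\in\tilde{X}$ with image $*\in\tilde{X}/K$; for each $(i,t)$ choose $\tilde{x}_{(i,t)}\in t\widetilde{BP_i}$ with image $x_{(i,t)}\in C_{(i,t)}$, together with a path $\tilde{\gamma}_{(i,t)}$ from $\tilde{*}$ to $\tilde{x}_{(i,t)}$ in the path-connected $\tilde{X}$. Define $f\colon V\rightarrow\tilde{X}/K$ to be the identity on $\tilde{Y}/K$, to send the central vertex of $V$ to $*$, and to send the edge joining the central vertex to $x_{(i,t)}$ along the projected path $\gamma_{(i,t)}$. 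Both $V$ and $\tilde{X}/K$ are then $K(K,1)$-spaces: the latter because $K$ acts freely on the contractible space $\tilde{X}$; the former because collapsing its edges yields the wedge $\bigvee_{(i,t)}C_{(i,t)}$, whose universal cover is assembled from contractible universal covers of the $C_{(i,t)}$ glued along a Bass-Serre tree and is therefore contractible.

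Finally I would check that $f_*\colon\pi_1(V)\rightarrow\pi_1(\tilde{X}/K)=K$ is an isomorphism. Van Kampen gives $\pi_1(V)=*_{(i,t)}\pi_1(C_{(i,t)},x_{(i,t)})=*_{(i,t)}tN_it^{-1}$, and the deck-transformation identification of $\pi_1(\tilde{X}/K,*)$ with $K$, together with the chosen lifts, makes the restriction of $f_*$ to each factor the honest subgroup inclusion $tN_it^{-1}\hookrightarrow K$. These factors generate $K$ as a free product by Cohen-Lyndon, so $f_*$ is an isomorphism and $f$ is a homotopy equivalence between aspherical CW complexes. The main obstacle is precisely this matching of factor inclusions: without performing all choices upstairs in $\tilde{X}$, the change-of-basepoint paths in $\tilde{X}/K$ would conjugate each $tN_it^{-1}$ by some element of $K$, and one would need a separate argument that the resulting conjugated subgroups still realize the Cohen-Lyndon free product. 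The uniform lifting to $\tilde{X}$ sidesteps this bookkeeping.
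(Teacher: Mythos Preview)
Your proposal is correct and follows essentially the same strategy as the paper: identify the components of $\tilde{Y}/K$ with the Cohen--Lyndon transversals, build $f$ edge-by-edge so that on $\pi_1$ each factor maps by the subgroup inclusion $tN_it^{-1}\hookrightarrow K$, and conclude by Whitehead's theorem since both spaces are aspherical. The only real difference is in how the edge maps are constructed: the paper works downstairs, defining $\varphi\colon(BP_i)_*\to X$ by concatenating the given path in the Eilenberg--MacLane pair with a loop representing $t$, and then lifting to $\tilde{X}/K$ via a commutative square; you instead choose paths directly in $\tilde{X}$ and project. Your choice makes the verification that $Y_t$ lands in the correct component of $\tilde{Y}/K$ automatic, whereas the paper checks this separately; conversely, the paper's argument makes the fact $K\cap P_i=N_i$ explicit from the free product structure, which you use without comment.
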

\begin{proof}
We may assume the conclusion of \refthm{thm: LC hyp emb}.
By \cite[Theorem 7.17a]{DGO}, we may also assume that $K\cap P_i=N_i$ (in the relatively hyperbolic setting, one may instead use \cite[Theorem 1.1]{Osin07}).

First, we establish some temporary notation.
For a path-connected space $V$, let $V_*$ denote $V$ with a disjoint basepoint and an edge connecting the two components.
We will also assume $X,\tilde{X}$ and $\tilde{X}/K$ are pointed such that the covering maps are basepoint preserving.
For each $i=1,...,m$, let $T_i$ be a transversal as in \refdef{def: cohen lyndon property}.


Now, we identify the set $\sqcup_{i=1}^mT_i$ with the components of $\tilde{Y}/K$ as follows.
Write $\tilde{Y}=\sqcup_{i=1}^m\sqcup_{gP_i\in G/P_i}g\cdot EP_i$ and consider the $K$ action on $\tilde{Y}$.
If for some $k\in K$, $kg\cdot EP_i=g\cdot EP_i$, then $g^{-1}kg\in P_i\cap K=N_i$ so the components of the quotient are the spaces $(EP_i)/N_i$.
It follows that $\tilde{Y}/K=\sqcup_{i=1}^m\sqcup_{\bar{g}\bar{P}_i\in\bar{G}/\bar{P}_i}\bar{g}\cdot EP_i/N_i$.
Associate to an element of $t\in T_i$ the component $\bar{t}\cdot EP_i/N_i$.
Since $T_i$ is a transversal, this gives the desired bijective correspondence.

Let $Y_t$ denote the component of $\tilde{Y}/K$ corresponding to $t\in T_i$.
We claim that there is a basepoint preserving map $(Y_t)_*\rightarrow\tilde{X}/K$ restricting to the inclusion on $Y_t$ and inducing the inclusion $N_i^t\inj K$ on fundamental groups.

We define the map as follows.
Since $(X,Y)$ is an Eilenberg-MacLane pair, there is a map $\psi:(BP_i)_*\rightarrow X$ inducing $P_i\inj G$ and restricting to the inclusion on $BP_i$.
Let $\sigma:[0,1]\rightarrow X$ denote the restriction of this map on the added edge.
Let $\tau:[0,1]\rightarrow X$ be a loop based at the basepoint of $X$ representing $t$.
Then, the map $\phii:(BP_i)_*\rightarrow X$ sending the added edge to the concatenation $\tau\circ\sigma$ induces the inclusion $P_i^t\inj G$.
There is the following diagram of pointed spaces and the corresponding diagram of fundamental groups.
\[
\begin{tikzcd}
(Y_t)_*\arrow{r}\arrow{d}&\tilde{X}/K\arrow{d}\\
(BP_i)_*\arrow{r}{\phii}&X
\end{tikzcd}\hspace{1cm}\begin{tikzcd}
N_i^t\arrow{r}\arrow[hook]{d}[left]{\subseteq}&K\arrow[hook]{d}{\subseteq}\\
P_i^t\arrow[hook]{r}{\subseteq}&G
\end{tikzcd}
\]
In the right diagram, $\subseteq$ is used to emphasize that the maps are inclusions of subgroups of $G$ and not just injective homomorphisms.
The top horizontal map in the right diagram must be the inclusion $N_i^t\inj K$.

It remains to ensure that, under the map $(Y_t)_*\rightarrow\tilde{X}/K$ above, $Y_t$ is sent to the correct component of $\tilde{Y}/K$.
Consider the maps $\tilde{\psi}$ and $\tilde{\phii}$ induced by $\psi$ and $\phii$ on universal covers.
Let $EP_i$ denote the component of $\tilde{Y}$ contained in the image of $\tilde{\psi}$.
By construction, the image of $\tilde{\phii}$ contains the translate $t\cdot EP_i$.
So, $Y_t$ is sent to $\bar{t}\cdot EP_i/N_i$ as desired.


The maps $\left(Y_t\right)_*\rightarrow\tilde{X}/K$ define a map $f\colon V\rightarrow\tilde{X}/K$ which induces an isomorphism on fundamental groups.
By Whitehead's theorem, this is a homotopy equivalence.
\end{proof}

\section{The Main Theorem}\label{sec: homalg}

In this section, we prove \refthm{thm: Main Thm}.

\MainThm*

By \refprop{prop: ecd is finite} and \refprop{thm: LC hyp emb}, \refthm{thm: Main Thm} follows from the following, more general, result.

\begin{restatable}{theorem}{GenMainThm}\label{thm: general main thm}
Suppose $(G,\mc{P})$ is a type $F_\infty$ group pair and let $\left(\bar{G},\bar{\mc{P}}\right)$ be a type $F_\infty$ Dehn filling satisfying the Cohen-Lyndon property.
If $\op{ecd}(G,\mc{P})=n$ then there is a spectral sequence
\[
E_{pq}^2=H_p(K;H^{n-q}(G,\mc{P};\Z G))\Rightarrow H^{n-(p+q)}\left(\bar{G},\bar{\mc{P}};\Z\bar{G}\right)
\]
where $K=\ker\left(G\surj\bar{G}\right)$.
The differentials $d^r_{pq}$ have bidegree $(-1,r-1)$.
\end{restatable}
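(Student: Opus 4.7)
The plan is to realize the spectral sequence as the hyperhomology spectral sequence of the restricted $K$-action on a cochain complex computing $H^*(G,\mc{P};\Z G)$, and then to identify the abutment with $H^*(\bar{G},\bar{\mc{P}};\Z\bar{G})$. The geometric input from \refprop{prop: homotopy of cusped space/K} enters only through a vanishing theorem for the $\Z K$-homology of the augmentation module $\Delta=\ker(\bigoplus_i\Z[G/P_i]\to\Z)$.

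The first and crucial step is to show that $H_p(K;\Delta)=0$ for all $p\ge 1$. Because $\tilde{X}$ and every component of $\tilde{Y}$ is contractible, the long exact sequence of the pair $(\tilde X,\tilde Y)$ identifies the relative cellular chain complex $C_*=C_*(\tilde{X},\tilde{Y};\Z)$ as a $\Z G$-free complex with $H_1(C_*)=\Delta$ and $H_q(C_*)=0$ for $q\ne 1$. As a $\Z K$-complex (by restriction) it is still free, so its hyperhomology spectral sequence collapses and yields $H_p(K;\Delta)\cong H_{p+1}(\tilde{X}/K,\tilde{Y}/K;\Z)$. By \refprop{prop: homotopy of cusped space/K} this equals $H_{p+1}(V,\tilde{Y}/K;\Z)$, and the relative cellular chain complex of $(V,\tilde{Y}/K)$ has only two nonzero terms, $\Z\{v\}$ in degree $0$ and $\bigoplus_i\Z[\bar{G}/\bar{P}_i]$ in degree $1$, with boundary the augmentation map. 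Its homology is concentrated in degree $1$ with value $\bar{\Delta}$, whence the vanishing.

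With the acyclicity in hand, choose a $\Z G$-projective resolution $P_\bullet\to\Delta$ with finitely generated terms (using type $F_\infty$). Since $\op{Tor}^K_p(\Delta,\Z)=H_p(K;\Delta)=0$ for $p\ge 1$, the complex $P_\bullet\otimes_{\Z K}\Z$ is a $\Z\bar{G}$-projective resolution of $\bar{\Delta}=\Delta\otimes_{\Z K}\Z$, and tensor-hom adjunction yields a natural isomorphism $\op{Ext}^*_{\bar G}(\bar\Delta,\Z\bar G)\cong\op{Ext}^*_G(\Delta,\Z\bar G)$. Now form $C^\bullet=\Hom_G(P_\bullet,\Z G)$; the coset decomposition $\Z G\cong\bigoplus_{\bar g\in\bar G}\Z K$ exhibits each $C^q$ as a free right $\Z K$-module. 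With $F_\bullet\to\Z$ a $\Z K$-projective resolution, the bicomplex $F_p\otimes_{\Z K}C^q$ yields two spectral sequences with the same abutment: the ``tensor-first'' filtration degenerates (by $\Z K$-freeness of $C^q$) to give $H^*(C^\bullet\otimes_{\Z K}\Z)=\op{Ext}^*_G(\Delta,\Z\bar G)$, while the other has $E^2_{p,q}=H_p(K;H^q(C^\bullet))=H_p(K;\op{Ext}^q_G(\Delta,\Z G))$. Shifting via $\op{Ext}^i_G(\Delta,-)=H^{i+1}(G,\mc{P};-)$, using the identification above to rewrite the abutment as $H^*(\bar G,\bar{\mc P};\Z\bar G)$, and reindexing $q\mapsto n-q$ (which places the page in the first quadrant because $\op{ecd}(G,\mc{P})=n$ bounds the rows) produces the statement of the theorem.

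The main obstacle is the acyclicity of $\Delta$ as a $\Z K$-module. The geometric input \refprop{prop: homotopy of cusped space/K} reduces what would otherwise be a Cohen--Lyndon calculation with the free-product structure of $K$ to the two-term chain complex of $(V,\tilde{Y}/K)$; once that vanishing is in place, the remainder of the argument is standard hyperhomology and base-change packaging.
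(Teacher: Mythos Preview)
Your argument is correct and arrives at the same spectral sequence by a route that differs from the paper's in one substantive respect: the identification of the abutment. Both proofs build the double complex $F_\bullet\otimes_{\Z K}C^\bullet$ with $F_\bullet\to\Z$ a $\Z K$-projective resolution and $C^\bullet$ a cochain complex of $\Z K$-free modules computing $H^*(G,\mc{P};\Z G)$; both read off the desired $E^2$-page from one filtration while the other collapses, and both invoke finiteness of $\op{ecd}(G,\mc{P})$ for convergence. The paper takes $C^\bullet=C_c^*(\mc{X}(G,\mc{P});\Z)$, so that $(C^\bullet)_K=C_c^*(\mc{X}(G,\mc{P})/K;\Z)$, and then argues geometrically (\refprop{prop: cusped space of quotient} and \refprop{prop: cohom of quotient}) by embedding $\mc{X}(G,\mc{P},N)/K$ into $\mc{X}(\bar{G},\bar{\mc{P}},N)$ with complement a half-open product whose compactly supported cohomology vanishes. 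You instead take $C^\bullet=\Hom_G(P_\bullet,\Z G)$ and distill \refprop{prop: homotopy of cusped space/K} into the purely algebraic statement $H_p(K;\Delta)=0$ for $p\ge1$; this makes $(P_\bullet)_K$ a $\Z\bar{G}$-projective resolution of $\bar{\Delta}$, and the base-change isomorphism $\op{Ext}^*_G(\Delta,\Z\bar{G})\cong\op{Ext}^*_{\bar{G}}(\bar{\Delta},\Z\bar{G})$ identifies the abutment directly. Your route avoids the cusped spaces and compactly supported cohomology altogether, isolating the Cohen--Lyndon input as a single $\op{Tor}$-vanishing lemma; the paper's route keeps the geometry of $\mc{X}(G,\mc{P})$ in view and connects to the other results that are phrased in terms of those spaces.
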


Using \refthm{thm: LC hyp emb}, we can apply \refthm{thm: general main thm} to group pairs $(G,\mc{P})$ where $\mc{P}$ is a hyperbolically embedded family of subgroups of $G$.
This gives a more general form of \refthm{thm: Main Thm}, which we record here.

\begin{restatable}{theorem}{ThmHypEmb}\label{thm: hyp emb}
Suppose $(G,\mc{P})$ is a type $F_\infty$ group pair.
Suppose also that $\mc{P}$ is a finite hyperbolically embedded family of subroups of $G$ and that $\op{ecd}(G,\mc{P})=n$.
Then the spectral sequence of \refthm{thm: general main thm} exists.
\end{restatable}

\begin{lemma}\label{lem: co F infty normal subgroups can be filled}
Let $G$ be a type $F_\infty$ group and let $N\trianglelefteq G$ be a normal subgroup such that $\bar{G}=G/N$ is also type $F_\infty$.
Let $BG$ be a classifying space for $G$ with finitely many cells in each dimension.
Then, there exists a classifying space $B\bar{G}$ for $\bar{G}$ with finitely many cells in each dimension such that $(EG)/N$ embeds $\bar{G}$ equivariantly as subcomplex of $E\bar{G}$.
\end{lemma}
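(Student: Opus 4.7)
The plan is to build $B\bar G$ as the mapping cylinder of a cellular map from $BG$ to any finite-type model of $B\bar G$. Since $\bar G$ is type $F_\infty$, I first fix any classifying space $Y$ for $\bar G$ with finitely many cells in each dimension. The quotient homomorphism $q\colon G\surj\bar G$ is realized, uniquely up to pointed homotopy, by a map $BG\to Y$; applying cellular approximation, I replace it by a cellular map $f\colon BG\to Y$.

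Set $B\bar G:=M_f$, the mapping cylinder of $f$. The standard deformation retract $M_f\to Y$ exhibits $B\bar G$ as a $K(\bar G,1)$, and the $n$-cells of $B\bar G$ consist of the $n$-cells of $Y$, the $n$-cells of $BG\times\{0\}$, and the $[0,1]$-prisms on the $(n-1)$-cells of $BG$. Since $BG$ and $Y$ both have finitely many cells in each dimension, so does $B\bar G$. By construction $BG=BG\times\{0\}$ is a subcomplex of $B\bar G$, and the inclusion $BG\inj B\bar G$ induces $q$ on fundamental groups.

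Finally, write $p\colon E\bar G\surj B\bar G$ for the universal cover. Then $p^{-1}(BG)$ is a subcomplex of $E\bar G$, and by the classification of covering spaces applied to $p^{-1}(BG)\to BG$ it is the regular $\bar G$-cover of $BG$ with fundamental group $\op{ker}(q)=N$, namely a copy of $EG/N$. The deck $\bar G$-action on $E\bar G$ restricts on $p^{-1}(BG)$ to the $\bar G$-action on $EG/N$ coming from quotienting the $G$-action on $EG$ by $N$, producing the desired $\bar G$-equivariant subcomplex embedding $(EG)/N\inj E\bar G$. The one subtlety worth verifying, rather than a genuine obstacle, is the identification of $p^{-1}(BG)$ with $EG/N$ as a $\bar G$-space; this is immediate once one observes that the inclusion $BG\inj B\bar G$ realizes $q$ on $\pi_1$, which is transparent from the mapping cylinder construction.
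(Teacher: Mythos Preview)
Your proof is correct and essentially identical to the paper's: both pick an arbitrary finite-type model for $B\bar G$, realize the quotient map by a map of spaces, and replace the target by a mapping cylinder so that the domain sits inside as a subcomplex. The only cosmetic difference is that the paper forms the mapping cylinder of the $\bar G$-equivariant map $(EG)/N\to E\bar G$ upstairs, whereas you form the mapping cylinder of $BG\to Y$ downstairs and then pass to the universal cover; these yield the same $E\bar G$.
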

\begin{proof}
Take any $B\bar{G}$ with finitely many cells in each dimension.
Then $BG\rightarrow B\bar{G}$ induces a $G$-equivariant map $EG\rightarrow E\bar{G}$.
This factors through $(EG)/N$ and the proposition follows from taking the mapping cylinder of $(EG)/N\rightarrow E\bar{G}$ as the desired model for $E\bar{G}$.
\end{proof}

\begin{convention}
For the remainder of this section, let $(G,\mc{P})$, $\left(\bar{G},\bar{\mc{P}}\right)$ and $K$ be as in the hypothesis of \refthm{thm: general main thm}.
\end{convention}

\begin{prop}\label{prop: cusped space of quotient}
$\mc{X}\left(\bar{G},\bar{\mc{P}},N\right)$ and $\mc{X}(G,\mc{P},N)/K$ can be constructed such that $\mc{X}\left(\bar{G},\bar{\mc{P}},N\right)$ contains $\mc{X}(G,\mc{P},N)/K$ as a subcomplex with complement $[0,\infty)\times V$ for some locally contractible, locally compact Hausdorff space $V$.
\end{prop}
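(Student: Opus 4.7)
The plan is to build both cusped spaces from a single pushout model of $\tilde{\bar{X}} = E\bar{G}$, extending $\tilde{X}/K$ outward by gluing on $\tilde{\bar{Y}}$ along $\tilde{Y}/K$ rather than ``filling in'' cells inside $\tilde{X}/K$. With this choice of model, the complement inside the cusped space of $(\bar{G},\bar{\mc{P}})$ will automatically take the desired product form.

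First, I would apply \reflem{lem: co F infty normal subgroups can be filled} to each pair $(P_i,N_i)$ (both type $F_\infty$ by hypothesis) to obtain contractible $\bar{P}_i$-CW complexes $E\bar{P}_i$ containing $EP_i/N_i$ as subcomplexes. Next, I would set $\tilde{\bar{Y}} := \sqcup_i \sqcup_{\bar{G}/\bar{P}_i} E\bar{P}_i$ with its natural $\bar{G}$-action, and embed $\tilde{Y}/K$ component-wise via the Cohen--Lyndon transversals identified in the proof of \refprop{prop: homotopy of cusped space/K}: the component $BN_i^t = EP_i^t/N_i^t$ of $\tilde{Y}/K$ includes into the $\bar{t}$-translate of $E\bar{P}_i$. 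Then I would form the pushout $\tilde{\bar{X}} := \tilde{X}/K \cup_{\tilde{Y}/K} \tilde{\bar{Y}}$, which carries a free $\bar{G}$-action.

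The hard part will be showing $\tilde{\bar{X}}$ is contractible (and hence serves as a valid $E\bar{G}$, with quotient $\tilde{\bar{X}}/\bar{G}$ completing an Eilenberg--MacLane pair for $(\bar{G},\bar{\mc{P}})$). Standard asphericity results for pushouts do not apply, because $\tilde{Y}/K \hookrightarrow \tilde{\bar{Y}}$ kills each $N_i^t$ on $\pi_1$. This is exactly what \refprop{prop: homotopy of cusped space/K} is set up to handle: it supplies a homotopy equivalence $V \to \tilde{X}/K$ that is the identity on $\tilde{Y}/K$, so by the gluing lemma for homotopy equivalences along cofibrations one gets $\tilde{\bar{X}} \simeq V \cup_{\tilde{Y}/K} \tilde{\bar{Y}}$. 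The latter is a central vertex with one edge to each of the contractible components of $\tilde{\bar{Y}}$, manifestly contractible.

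Finally, I would define $\mc{X}(\bar{G},\bar{\mc{P}},N) = \tilde{\bar{X}}^{(N+1)} \cup [0,\infty) \times \tilde{\bar{Y}}^{(N+1)}$ and $\mc{X}(G,\mc{P},N)/K = (\tilde{X}/K)^{(N+1)} \cup [0,\infty) \times (\tilde{Y}/K)^{(N+1)}$ from these models. The subcomplex inclusion is then immediate, and since $\tilde{\bar{X}}^{(N+1)} \setminus (\tilde{X}/K)^{(N+1)}$ equals $\tilde{\bar{Y}}^{(N+1)} \setminus (\tilde{Y}/K)^{(N+1)}$ by construction, the ``inside'' piece of the complement at cusp-level $0$ glues to the cusp piece $(0,\infty) \times (\tilde{\bar{Y}}^{(N+1)} \setminus (\tilde{Y}/K)^{(N+1)})$ to produce $[0,\infty) \times V$ with $V := \tilde{\bar{Y}}^{(N+1)} \setminus (\tilde{Y}/K)^{(N+1)}$. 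As an open subset of the locally finite Hausdorff CW complex $\tilde{\bar{Y}}^{(N+1)}$, the space $V$ is automatically locally contractible, locally compact, and Hausdorff.
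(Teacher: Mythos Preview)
Your proposal is correct and follows essentially the same route as the paper: use \reflem{lem: co F infty normal subgroups can be filled} to enlarge each component $EP_i/N_i$ of $\tilde{Y}/K$ to a copy of $E\bar{P}_i$, form the pushout $\tilde{X}/K\cup_{\tilde{Y}/K}\tilde{\bar{Y}}$, and invoke \refprop{prop: homotopy of cusped space/K} together with the gluing lemma for homotopy equivalences (the paper cites \cite[7.5.7]{RBrown} for this) to see the result is contractible. Your write-up is in fact more explicit than the paper's terse proof, particularly in identifying $V=\tilde{\bar{Y}}^{(N+1)}\setminus(\tilde{Y}/K)^{(N+1)}$ and verifying its point-set properties.
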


\begin{proof}
Let $(X,Y)$ be the Eilenberg-MacLane pair used in constructing $\mc{X}(G,\mc{P})$.
If $\tilde{X}$ is the universal cover of $X$ and $\tilde{Y}$ is the preimage of $Y$ under the projection, then each component of $\tilde{Y}/K$ is $(EP_i)/(K\cap P_i)$ for some $i$.
Using \reflem{lem: co F infty normal subgroups can be filled} we can attach $E\bar{P}_i$ to $\tilde{X}/K$ along these components.
By the conclusion \ref{prop: homotopy of cusped space/K} and \cite[7.5.7]{RBrown}, the resulting space is contractible.
The proposition then follows from the construction of $\mc{X}(G,\mc{P},N)$.
\end{proof}
\begin{prop}\label{prop: cohom of quotient}
For each $N\ge0$, there is the following isomorphism.
\[
H_c^i(\mc{X}(G,\mc{P},N)/K;\Z)\cong H_c^i\left(\mc{X}\left(\bar{G},\bar{\mc{P}},N\right);\Z\right)
\]
As a consequence,
\[
H_c^i\left(\mc{X}\left(G,\mc{P}\right)/K;\Z\right)\cong H^i\left(\bar{G},\bar{\mc{P}};\Z\bar{G}\right).
\]
\end{prop}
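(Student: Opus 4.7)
The plan is to obtain the first isomorphism directly from the long exact sequence in compactly supported cohomology and then deduce the second isomorphism by passing to arbitrarily large $N$.

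First, I invoke \refprop{prop: cusped space of quotient} to view $\mc{X}(G,\mc{P},N)/K$ as a closed subcomplex of $\mc{X}(\bar{G},\bar{\mc{P}},N)$ whose complement is homeomorphic to $[0,\infty)\times V$ for some locally contractible, locally compact Hausdorff space $V$. Because $[0,\infty)$ is homeomorphic to $[0,1)$, \refcor{cor: prod with half open interval} gives $H_c^i\bigl([0,\infty)\times V;\Z\bigr)=0$ in every degree. Feeding this vanishing into the long exact sequence of \refthm{thm: LES cohom compact supp} collapses every third term and yields the first isomorphism
\[
H_c^i\bigl(\mc{X}(G,\mc{P},N)/K;\Z\bigr)\cong H_c^i\bigl(\mc{X}\bigl(\bar{G},\bar{\mc{P}},N\bigr);\Z\bigr).
\]

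For the consequence, I fix a degree $i$ and choose $N>i$. Exactly as in the proof of \refprop{prop: cohom contractible cusped space}, working at the level of compactly supported cellular cochains shows that the contractible cusped space and the $N$-connected cusped space have the same degree-$i$ compactly supported cohomology, for both $\mc{X}(G,\mc{P})/K$ and $\mc{X}(\bar{G},\bar{\mc{P}})$, since each respective pair of cochain complexes agrees through dimension $N+1$. Combining these identifications with the first isomorphism and with \refprop{prop: cohom of N-conn cusped spaces} (which identifies $H_c^i(\mc{X}(\bar{G},\bar{\mc{P}},N);\Z)$ with $H^i(\bar{G},\bar{\mc{P}};\Z\bar{G})$ when $N\ge i$) closes out the chain and produces the claimed isomorphism.

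I do not anticipate any real obstacle; the proof is essentially an assembly of results already in the excerpt. The point requiring the most care is verifying that the complement $[0,\infty)\times V$ coming from \refprop{prop: cusped space of quotient} genuinely satisfies the local contractibility and local compactness hypotheses needed to invoke \refthm{thm: LES cohom compact supp} and \refcor{cor: prod with half open interval}. This should be immediate from the mapping cylinder construction of \reflem{lem: co F infty normal subgroups can be filled} together with the standard cusp construction, but it is worth stating explicitly when citing \refprop{prop: cusped space of quotient}.
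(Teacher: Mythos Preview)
Your proposal is correct and follows essentially the same route as the paper: apply the long exact sequence in compactly supported cohomology to the pair furnished by \refprop{prop: cusped space of quotient}, kill the terms coming from $[0,\infty)\times V$ via \refcor{cor: prod with half open interval}, and then pass to large $N$ using \refprop{prop: cohom of N-conn cusped spaces}. Your closing caveat about the point-set hypotheses on $V$ is already absorbed into the statement of \refprop{prop: cusped space of quotient}, so no extra justification is needed there.
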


\begin{proof}
By \refprop{prop: cusped space of quotient} and the long exact sequence of a pair for compactly supported cohomology, we get the following long exact sequence.
\[
\cdots\rightarrow H_c^i([0,\infty)\times V;\Z)\rightarrow H_c^i(\mc{X}(\bar{G},\bar{\mc{P}},N);\Z)\rightarrow H_c^i(\mc{X}(G,\mc{P},N)/K;\Z)\rightarrow\cdots
\]
By \refcor{cor: prod with half open interval}, the compactly supported cohomology of $[0,\infty)\times V$ vanishes so $H_c^i(\mc{X}(\bar{G},\bar{\mc{P}},N);\Z)\rightarrow H_c^i(\mc{X}(G,\mc{P},N)/K;\Z)$ is an isomorphism.

There are isomorphisms
\[
H_c^i(\mc{X}(G,\mc{P})/K;\Z)\cong H_c^i(\mc{X}(G,\mc{P},N)/K;\Z)\cong H_c^i\left(\mc{X}\left(\bar{G},\bar{\mc{P}},N\right);\Z\right)\cong H^i\left(\bar{G},\bar{\mc{P}};\Z\bar{G}\right)
\]
for $i\le N$.
The second statement of the proposition follows from taking $N$ arbitrarily large.
\end{proof}

We now give a spectral sequence computing $H_c^i(\mc{X}(G,\mc{P})/K;\Z)$.
Let $n=\op{ecd}(G,\mc{P})$.
For the spectral sequence, it will be convenient to regard $C_c^*(\mc{X}(G,\mc{P});\Z)$ as a chain complex bounded above such that $C_c^n(\mc{X}(G,\mc{P});\Z)$ is the degree $0$ part.
Let $F_*$ be a projective resolution of $\Z$ by $\Z K$-modules.
Then, $F_*\otimes_KC_c^{n-*}(\mc{X}(G,\mc{P});\Z)$ is a double complex with degree $m$ part $\oplus_{p+q=m}F_p\otimes_KC_c^{n-q}(\mc{X}(G,\mc{P});\Z)$.
This double complex has finitely many rows in the first quadrant but possible infinitely many rows in the fourth quadrant so there may be convergence issues with the associated spectral sequences.
We follow \cite[VII.5]{Brown} to compute the second page of the associated spectral sequences and show that this is not the case.

Consider the spectral sequence associated to the double complex where we first take the homology of the rows and then the homology of the columns.
Since each $C_c^i(\mc{X}(G,\mc{P});\Z)$ is a free $\Z K$-module\footnote{We are using that each $C_c^i(\mc{X}(G,\mc{P});\Z)$ is flat as a $\Z K$-module, which prohibits us from generalizing our theorem to study the cohomology with coefficients in $RG$, for an arbitrary commutative ring $R$.} we obtain the following computation.
\[
E^1_{pq}=\begin{cases}
C_c^{n-q}(\mc{X}(G,\mc{P});\Z)/K&p=0\\
0&p\neq0
\end{cases}
\]
The computation for the second page follows immediately.
\[
E^2_{pq}=\begin{cases}
H_c^{n-q}(\mc{X}(G,\mc{P})/K;\Z)&p=0\\
0&p\neq0
\end{cases}
\]
Thus, this spectral sequence degenerates at $E^2$ and $H_m(F_*\otimes_KC_c^{n-*}(\mc{X}(G,\mc{P});\Z))\cong H_c^{n-m}(\mc{X}(G,\mc{P});\Z)$.

Now, consider the spectral sequence obtained by taking first the homology of the columns and then homology of the rows.
Since each $F_p$ is projective, the functor $F_p\otimes_K-$ is exact and $H_m(F_p\otimes_KC_c^{n-*}(\mc{X}(G,\mc{P});\Z))\cong F_p\otimes_KH_m(C_c^{n-*}(\mc{X}(G,\mc{P});\Z))$.
The second page is given by
\[
E^2_{pq}=H_p\left(K;H_c^{n-q}\left(\mc{X}\left(G,\mc{P}\right);\Z\right)\right).
\]
By \refprop{prop: cohom contractible cusped space}, $H_c^i\left(\mc{X}\left(G,\mc{P}\right);\Z\right)\cong H^i\left(G,\mc{P};\Z G\right)$ as $\Z K$-modules so
\[
E^2_{pq}\cong H_p\left(K;H^{n-q}(G,\mc{P};\Z G)\right).
\]
By our assumption that $\op{ecd}(G,\mc{P})$ is finite, the second page has finitely many nonzero rows.
Therefore, the spectral sequence converges.
We summarize this discussion in the following lemma.

\begin{lemma}\label{lem: spectral-sequence}
There is a spectral sequence
\[
E^2_{pq}=H_p\left(K;H^{n-q}\left(G,\mc{P};\Z G\right)\right)\Rightarrow H_c^{n-(p+q)}\left(\mc{X}\left(G,\mc{P}\right)/K;\Z\right)
\]
with differentials $d^r_{pq}$ of bidegree $(-r,r-1)$.
\end{lemma}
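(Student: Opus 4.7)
The plan is to realize the desired spectral sequence as one of the two standard filtration spectral sequences of a single double complex, using the other filtration to identify the abutment. Concretely, let $F_*\twoheadrightarrow\Z$ be a projective resolution by $\Z K$-modules and form the double complex
\[
D_{pq} = F_p\otimes_K C_c^{n-q}(\mc{X}(G,\mc{P});\Z),
\]
where we treat the compactly supported cochain complex as a homological complex concentrated in degrees $\le n$, so that $q$ ranges over all integers with $q\le n$ while $p\ge0$. The total complex has finitely many terms in each bidiagonal of the first quadrant but potentially infinitely many in the fourth, so I will have to be careful about convergence at the end.

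First I would run the spectral sequence obtained by taking row homology first. The crucial input is that each $C_c^{i}(\mc{X}(G,\mc{P});\Z)$ is a \emph{free} $\Z K$-module, because $\mc{X}(G,\mc{P})$ is a free $K$-CW complex with locally finite skeleta (using that the $K$-action permutes cells freely, which follows from $K\le G$ acting freely on $\tilde{X}$ and on $[0,\infty)\times\tilde{Y}$). Consequently tensoring with $F_*$ preserves exactness in the $p$-direction, so the $E^1$ page is concentrated in the column $p=0$, where it equals the $K$-coinvariants of the cochain complex. Taking homology once more gives $H_c^{n-q}(\mc{X}(G,\mc{P})/K;\Z)$ in bidegree $(0,q)$, and the spectral sequence collapses at $E^2$. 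By \refprop{prop: cohom of quotient} this identifies the total homology with $H^{n-m}(\bar{G},\bar{\mc{P}};\Z\bar{G})$, which is the desired abutment.

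Next I would run the other spectral sequence, taking column homology first. Since each $F_p$ is projective, hence flat over $\Z K$, the functor $F_p\otimes_K-$ is exact and commutes with taking homology of the cochain complex, so
\[
E^1_{pq} = F_p\otimes_K H_c^{n-q}(\mc{X}(G,\mc{P});\Z).
\]
Using \refprop{prop: cohom contractible cusped space} to rewrite $H_c^{n-q}(\mc{X}(G,\mc{P});\Z)$ as $H^{n-q}(G,\mc{P};\Z G)$ (an isomorphism of $\Z K$-modules, because the cellular identification is $K$-equivariant), then taking homology in the $p$-direction computes $\op{Tor}^K_p$, giving $E^2_{pq}=H_p(K;H^{n-q}(G,\mc{P};\Z G))$. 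The differentials $d^r$ in this filtration have the standard bidegree $(-r,r-1)$.

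The main subtlety, as flagged in the discussion preceding the lemma, is convergence. A priori the double complex extends into the fourth quadrant and the associated spectral sequences need not converge. Here I would use the hypothesis $\op{ecd}(G,\mc{P})=n<\infty$: then $H^{n-q}(G,\mc{P};\Z G)=0$ for $q<0$, so the $E^2$ page of the column-first spectral sequence is supported in the first quadrant and has only finitely many nonzero rows ($0\le q\le n$). Strong convergence of the filtration spectral sequence to the total homology then follows from a standard bounded-row argument (compare \cite[VII.5]{Brown}), and combining with the identification of the abutment from the row-first spectral sequence completes the proof. The only step that requires any real care is this convergence verification; everything else is a direct application of the double-complex formalism.
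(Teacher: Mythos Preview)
Your proposal is correct and follows essentially the same approach as the paper: form the double complex $F_*\otimes_K C_c^{n-*}(\mc{X}(G,\mc{P});\Z)$, use freeness of the cochain modules over $\Z K$ to collapse the row-first spectral sequence and identify the abutment, use projectivity of $F_p$ to compute the $E^2$ page of the column-first spectral sequence, and invoke $\op{ecd}(G,\mc{P})<\infty$ to guarantee convergence via finitely many nonzero rows. The only (harmless) difference is that you also invoke \refprop{prop: cohom of quotient} to further identify the abutment with $H^{n-m}(\bar{G},\bar{\mc{P}};\Z\bar{G})$, which in the paper is deferred to the sentence after the lemma.
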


\reflem{lem: spectral-sequence} and \refprop{prop: cohom of quotient} give \refthm{thm: general main thm}.
\bibliographystyle{alpha}
\bibliography{SSDF-Accepted.bib}
\end{document}